\theoremstyle{plain}
\newtheorem{thm}{Theorem}[section]
\newtheorem{prop}{Proposition}[section]
\newtheorem{lem}{Lemma}[section]
\newtheorem{cor}{Corollary}[section]
\theoremstyle{definition}
\newtheorem{df}{Definition}[section]
\newtheorem{rem}{Remark}[section]
\newtheorem{prob}{Problem}[section]
\newcommand{\Q}{\mathbb{Q}}
\newcommand{\R}{\mathbb{R}}
\newcommand{\OOO}{\mathcal{O}}
\newcommand{\aaa}{\mathfrak{a}}
\newcommand{\QQ}{\mathbb{Q}}
\newcommand{\ZZ}{\mathbb{Z}}
\newcommand{\RR}{\mathbb{R}}
\newcommand{\NN}{\mathbb{N}}
\newcommand{\e}{\hfill $\Box$}
\begin{document}

\title{On a property of $2$-dimensional integral Euclidean lattices}

\author{Eiichi Bannai\thanks{Graduate School of Mathematics Kyushu University 
Motooka 744 Nishi-ku, Fukuoka, 819-0395 Japan. 
email: bannai@math.kyushu-u.ac.jp} 
and 
Tsuyoshi Miezaki\thanks{
Division of Mathematics, 
Graduate School of Information Sciences, 
Tohoku University, 
6-3-09 Aramaki-Aza-Aoba, Aoba-ku, Sendai 980-8579, Japan, 
miezaki@math.is.tohoku.ac.jp}
}
\date{}
\maketitle
\begin{abstract}
Let $\Lambda$ be any integral lattice in the $2$-dimensional Euclidean space. 
Generalizing the earlier works of Hiroshi Maehara and others,
we prove that for every integer $n>0$, 
there is a circle in the plane $\RR^{2}$ 
that passes through exactly $n$ points of $\Lambda$. 
\end{abstract}
\noindent
{\small\bfseries Key Words and Phrases.}
quadratic fields, lattices.\\ \vspace{-0.15in}

\noindent
2000 {\it Mathematics Subject Classification}. 
Primary 05E99; Secondary 11R04.\\ \quad

\section{Introduction}
\markright{Universally concyclic}

We consider the following condition 
on $2$-dimensional lattices $\Lambda \subset \RR^{2}$. 
\begin{df}
If there is a circle in the plane $\R^{2}$ that 
passes through exactly $n$ points of $\Lambda$ for every integer $n>0$, 
then $\Lambda$ is called universally concyclic. 
\end{df}
A lattice generated by $(a, b), (c, d)\in \RR^{2}$, 
$(ad-bc\neq 0)$ is denoted by $\Lambda[(a, b), (c, d)]$. 
In \cite{M2}, Maehara introduced the term ``universally concyclic". 
Then, he and others showed the following results. 
In \cite{S} and \cite{MM}, Schinzel, Maehara and Matsumoto proved that 
$\ZZ^{2}$, that is, $\Lambda[(1, 0), (0, 1)]$ is universally concyclic. 
Moreover let $a$, $b$, $c$, $d\in\ZZ$ be such that $q:=ad-bc$ is a prime 
and $q\equiv 3 \pmod{4}$. 
Then $\Lambda[(a, b), (c, d)]$ is universally concyclic. 
The equilateral triangular lattice 
$\Lambda[(1, 0), (-1/2, \sqrt{-3}/2])]$
and 
rectangular lattice 
$\Lambda[(1, 0), (0, \sqrt{-3}])]$ 
are universally concyclic. 

Let $\ZZ[x]:=\{a+bx\mid a,b\in\ZZ\}$. 
We remark that for a positive integer $d$, 
a lattice $\Lambda[(1, 0), (a, b\sqrt{d})]$ is also given by 
$\ZZ[a+b\sqrt{-d}]$ in the complex plane. 
We define the set $A(k)$ as follows: 
\[
A(k):=\{z\in \ZZ[\sqrt{-3}]\mid \vert z\vert ^{2} =7^{k}\}. 
\]
In \cite{M2}, Maehara proved the following result:
\begin{lem}[cf. \cite{M2}]
$\sharp A(k)=2(k+1)$. 
\end{lem}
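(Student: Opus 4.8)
The plan is to translate the counting problem into the arithmetic of $\QQ(\sqrt{-3})$ and then sieve inside the maximal order for the elements that genuinely lie in the sub-order $\ZZ[\sqrt{-3}]$. Writing $z = a + b\sqrt{-3}$, the condition $|z|^{2} = 7^{k}$ is exactly $a^{2} + 3b^{2} = 7^{k}$, so $A(k)$ is the set of elements of norm $7^{k}$ in the order $\ZZ[\sqrt{-3}]$. First I would pass to the ring of Eisenstein integers $\ZZ[\omega]$, where $\omega = (-1 + \sqrt{-3})/2$; it contains $\ZZ[\sqrt{-3}]$ with index $2$, is a principal ideal domain, and has the six units $\{\pm 1, \pm\omega, \pm\omega^{2}\}$. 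Since $\left(\frac{-3}{7}\right) = 1$, the prime $7$ splits, and concretely $7 = (2+\sqrt{-3})(2-\sqrt{-3}) = \pi\overline{\pi}$ with $\pi := 2 + \sqrt{-3}$ a prime of $\ZZ[\omega]$ that is not associate to $\overline{\pi}$.

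Because $\ZZ[\omega]$ is a unique factorization domain and $\pi, \overline{\pi}$ are, up to units, the only primes above $7$, every element of norm $7^{k}$ in $\ZZ[\omega]$ has the form $u\,\pi^{i}\overline{\pi}^{\,k-i}$ with $0 \le i \le k$ and $u$ a unit. Since $\pi$ and $\overline{\pi}$ are non-associate, the products $\pi^{i}\overline{\pi}^{\,k-i}$ are pairwise non-associate for distinct $i$, so these $6(k+1)$ elements are distinct; this recovers the fact that the maximal order has $6(k+1)$ elements of norm $7^{k}$.

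The heart of the matter, and the step I expect to be the real obstacle, is deciding which of these $6(k+1)$ elements actually lie in the non-maximal order $\ZZ[\sqrt{-3}]$. I would resolve this by reducing modulo $2$: because $-3 \equiv 5 \pmod 8$, the prime $2$ is inert, so $\ZZ[\omega]/2\ZZ[\omega] \cong \FF_{4}$, and writing elements in the basis $\{1,\omega\}$ one checks that $\ZZ[\sqrt{-3}]$ is precisely the preimage of the prime subfield $\FF_{2}$. Since $\pi = 3 + 2\omega \equiv 1$ and $\overline{\pi} \equiv 1 \pmod 2$, we get $\pi^{i}\overline{\pi}^{\,k-i} \equiv 1$ for every $i$, whence $u\,\pi^{i}\overline{\pi}^{\,k-i} \equiv u \pmod 2$; this lies in $\FF_{2}$ exactly when $u \equiv 1$, i.e. $u \in \{\pm 1\}$, while $\pm\omega$ and $\pm\omega^{2}$ reduce to $\omega, \omega^{2} \notin \FF_{2}$. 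Hence for each $i$ exactly two of the six unit multiples lie in $\ZZ[\sqrt{-3}]$, giving $\sharp A(k) = 2(k+1)$. One could instead argue directly that $\omega\,\pi^{i}\overline{\pi}^{\,k-i} \notin \ZZ[\sqrt{-3}]$ by comparing the parities of its coordinates, but it is the inertness of $2$ that cleanly forces the answer to be independent of $i$.
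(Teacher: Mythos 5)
Your proof is correct, but it follows a genuinely different route from the paper's. The paper does not prove this lemma in isolation; it is subsumed by Theorem \ref{thm:main1} (with $n=3$, $p=7$, $d_K=-3$, conductor $f=2$), whose proof stays inside the non-maximal order $\OOO_f=\ZZ[\sqrt{-3}]$ and invokes Cox's machinery: the split prime gives principal $\OOO_K$-ideals $\pp,\pp'$, their intersections with $\OOO_f$ are proper $\OOO_f$-ideals prime to the conductor, unique factorization of such ideals yields exactly the $k+1$ ideals $\mathfrak{q}^i\mathfrak{q}'^{\,k-i}$ of norm $p^k$, and the unit group $\{\pm 1\}$ doubles the count. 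You instead work in the maximal order $\ZZ[\omega]$, use that it is a PID with six units to enumerate all $6(k+1)$ elements of norm $7^k$ as $u\,\pi^i\overline{\pi}^{\,k-i}$, and then sieve for membership in the index-$2$ suborder by reducing modulo the inert prime $2$, where $\ZZ[\sqrt{-3}]$ is the preimage of $\FF_2\subset\FF_4$ and $\pi\equiv\overline{\pi}\equiv 1$. All the computations you rely on ($(-3/7)=1$, $\pi=3+2\omega$ prime and non-associate to $\overline{\pi}$, $-3\equiv 5\pmod 8$ forcing $2$ inert, exactly two of the six units reducing into $\FF_2$) check out. Your argument is more elementary and self-contained for this specific case, and your congruence sieve is the concrete shadow of the paper's ``prime to the conductor'' formalism --- the fact that $\pi$ and $\overline{\pi}$ can be chosen congruent to $1$ mod $2$ plays exactly the role of the paper's observation that $\pp$ and $\pp'$ are generated by elements of $\ZZ[\sqrt{-n}]$. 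What the paper's abstract approach buys is uniformity in $n$ and $p$, which is needed for Theorem \ref{thm:main1} and ultimately Theorem \ref{thm:main}; what yours buys is transparency and independence from the theory of orders.
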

Then, Maehara \cite{M2} proposed the following problems: 
\begin{prob}[cf. \cite{M2}]\label{prob:Maehara1}
For every square-free integer $d>1$ and a prime $p$ such that 
$p=x^2+y^2 d$, we have $\sharp\{z\in \ZZ[\sqrt{-d}]\mid 
\vert z\vert ^{2}=p^{k}\}\geq 2(k+1)$ for every $k$. Does equality always hold?
\end{prob}
\begin{prob}[cf. \cite{M2}]\label{prob:Maehara2}
Is $\Lambda[(a, b),(c, d)]$ universally 
concyclic if $a$, $b$, $c$, $d\in \ZZ$ 
and $ad-bc\neq 0$. 
\end{prob}
Here, we answer Problems \ref{prob:Maehara1} and \ref{prob:Maehara2} affirmatively. 
In fact, we prove a slightly stronger assertion in Theorems \ref{thm:main1} and \ref{thm:main} below. 
Let $d$ be a square-free positive integer and 
$K$ be the imaginary quadratic field $K=\QQ(\sqrt{-d})$. 
We define $\OOO_{K}$ as the integer ring of $K$. 
Let $\ZZ\cdot a + \ZZ\cdot b$ denote 
the linear combination of $a$ and $b$ with integer coefficients. 
Then $\OOO_{K}$ will be written as follows:
\begin{equation}\label{eqn:L1}
\OOO_{K}=\ZZ\cdot 1+\ZZ\cdot w_K,
\end{equation} 
where
\begin{equation}\label{eqn:L}
w_K=
\left\{
\begin{array}{ll}
\sqrt{-d}\quad &{\rm if}\ -d\equiv 2,\ 3 \pmod {4}, \\
\displaystyle \frac{-1+\sqrt{-d}}{2}\quad &{\rm if}\ -d\equiv 1 \pmod {4}. 
\end{array}
\right.
\end{equation}
We denote by $d_{K}$ the discriminant of $K$:
\[
d_{K}=\left\{
\begin{array}{lll}
-4d\ &{\rm if }\ -d\equiv 2,\ 3&\pmod {4}, \\
-d\ &{\rm if }\ -d\equiv 1 &\pmod {4}. 
\end{array}
\right.
\]
We review the concept of order 
in a quadratic field (for more details, see~\cite{Cox}). 
An order $\OOO$ in a quadratic field $K$ is a subset 
$\OOO\subset K$ such that 
\begin{enumerate}
\item 
$\OOO$ is a subring of $K$ containing $1$. 
\item 
$\OOO$ is a finitely generated $\ZZ$-module. 
\item 
$\OOO$ contains a $\QQ$-basis of $K$. 
\end{enumerate}
We can now describe all orders in a quadratic fields: 
\begin{lem}[cf.~{\cite[page.~133]{Cox}}]\label{lem:order}
Let $\OOO$ be an order in a quadratic field $K$ of discriminant $d_{K}$. 
Then $\OOO$ has a finite index in $\OOO_{K}$, and 
if we set $f=[\OOO_{K}:\OOO]$, then 
\begin{equation}\label{eqn:order}
\OOO=\ZZ+f\OOO_{K}=\ZZ\cdot 1+\ZZ\cdot fw_{K}, 
\end{equation}
where $w_{K}$ is as in $(\ref{eqn:L})$. 
Here $f$ is called a conductor of the order $\OOO$. 
\end{lem}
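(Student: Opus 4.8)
The plan is to prove the structure theorem for orders, Lemma~\ref{lem:order}, in two stages: first show that any order $\OOO$ is commensurable with $\OOO_K$ (finite index), and then identify exactly which $\ZZ$-submodule of $\OOO_K$ can be a subring containing $1$.

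First I would establish that $[\OOO_K:\OOO]$ is finite. Since $\OOO$ contains a $\QQ$-basis of $K$ by condition (3), the $\QQ$-span of $\OOO$ is all of $K$, so $\OOO\otimes_\ZZ\QQ=K=\OOO_K\otimes_\ZZ\QQ$. Both $\OOO$ and $\OOO_K$ are finitely generated $\ZZ$-modules (condition (2) for $\OOO$; $\OOO_K=\ZZ\cdot 1+\ZZ\cdot w_K$ is free of rank $2$ by~(\ref{eqn:L1})) that are torsion-free, hence free of rank $2$. Two rank-$2$ free $\ZZ$-submodules of $K$ with the same $\QQ$-span are commensurable, and in fact I would argue directly that $\OOO\subseteq\OOO_K$: every element of $\OOO$ is integral over $\ZZ$ because it lies in the finitely generated ring $\OOO$, so it belongs to the integral closure $\OOO_K$. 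Therefore $\OOO$ is a finite-index subgroup of $\OOO_K$; set $f=[\OOO_K:\OOO]$.

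Next I would pin down the shape of $\OOO$ as in~(\ref{eqn:order}). Write a general element of $\OOO_K$ as $a+bw_K$ with $a,b\in\ZZ$, and consider the set $B=\{b\in\ZZ\mid a+bw_K\in\OOO\text{ for some }a\in\ZZ\}$, which is a subgroup of $\ZZ$, hence $B=f'\ZZ$ for some $f'\geq 0$. Because $\OOO$ contains $1$, it contains all of $\ZZ$, so the elements of $\OOO$ with $b=0$ are exactly $\ZZ$. The key structural input is that $\OOO$ is a ring: I would pick $\alpha=a_0+f'w_K\in\OOO$ realizing the minimal positive coefficient $f'$, subtract an integer to arrange $\alpha=f'w_K$ (using $\ZZ\subset\OOO$), and then show $\OOO=\ZZ\cdot 1+\ZZ\cdot f'w_K$. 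That $\OOO$ is contained in this module follows from the definition of $f'$; the reverse containment is immediate once $f'w_K\in\OOO$. Finally, an index computation gives $[\OOO_K:\ZZ\cdot 1+\ZZ\cdot f'w_K]=f'$, so $f'=f$ and~(\ref{eqn:order}) holds.

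The main obstacle is the verification that the candidate module $\ZZ\cdot 1+\ZZ\cdot fw_K$ really is closed under multiplication (i.e.\ that it is genuinely a subring), since a priori a minimal-coefficient generator need not generate a multiplicatively closed set; here one must use that $w_K$ satisfies a monic quadratic over $\ZZ$ with, crucially, $w_K^2\in\ZZ\cdot 1+\ZZ\cdot w_K$, so that $(fw_K)^2=f^2w_K^2$ lies in $\ZZ\cdot 1+\ZZ\cdot f w_K$ exactly because the $w_K$-coefficient of $w_K^2$ is an integer multiplied by $f^2$, divisible by $f$. I would make this explicit from the two cases of~(\ref{eqn:L}): when $w_K=\sqrt{-d}$ one has $w_K^2=-d\in\ZZ$, and when $w_K=(-1+\sqrt{-d})/2$ one has $w_K^2=-w_K-\tfrac{1+d}{4}$ with $\tfrac{1+d}{4}\in\ZZ$ since $-d\equiv 1\pmod 4$. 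This closure check, rather than the index or integrality arguments, is where the real content lies, and it is precisely the place where the arithmetic of $w_K$ enters.
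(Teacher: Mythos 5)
The paper gives no proof of this lemma; it is quoted directly from Cox (page 133), so there is nothing internal to compare against. Your argument is correct and is essentially the standard one: integrality of a finitely generated subring forces $\OOO\subseteq\OOO_K$, rank considerations give finite index, and the projection onto the $w_K$-coefficient identifies $\OOO$ with $\ZZ\cdot 1+\ZZ\cdot f'w_K$, after which the index computation pins down $f'=f$.

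One logical remark: the step you single out as ``the main obstacle'' --- verifying that $\ZZ\cdot 1+\ZZ\cdot fw_K$ is closed under multiplication --- is not actually needed for the direction the lemma asserts. Your set-theoretic argument already shows that this module \emph{equals} $\OOO$, which is a ring by hypothesis, so multiplicative closure comes for free. The computation with $w_K^2$ is only required for the converse (that every module of this form is an order, i.e.\ that every $f\geq 1$ occurs), which Cox also proves but which the lemma as stated does not claim. The computation you give for that converse is nonetheless correct.
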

We denote $\OOO$ by $\OOO_f$ if $f=[\OOO_{K}:\OOO]$. 
Now, we introduce the concept of proper ideals of an order. 
For any ideal $\aaa$ of $\OOO_{f}$, notice that 
\[
\OOO_{f}\subset\{\beta\in K\mid \beta \aaa\subset \aaa\}
\]
since $\aaa$ is an ideal of $\OOO_{f}$. 
We say that an ideal $\aaa$ of $\OOO_{f}$ 
is proper whenever equality holds, i.e., when 
\[
\OOO_{f}=\{\beta\in K\mid \beta \aaa\subset \aaa\}. 
\] 
A quadratic form $F$ is called integral 
if all the coefficients of $F$ are rational integers. 
A lattice $\Lambda$ is called integral 
if $(x, y) \in \ZZ$ for all $x, y \in\Lambda$, 
where $(x, y)$ is the standard inner product. 
Generally, it is well-known that there exists a one-to-one correspondence 
between the set of proper ideal classes of the order 
$\OOO_{f}$ and 
the equivalence class of primitive positive definite integral quadratic forms 
$F(x, y)$ 
with discriminant $f^{2}d_{K}<0$ 
(see Theorem \ref{thm:Cox1} in Section \ref{section:Preliminaries}, 
\cite[Chapter 2, \S 7-6]{BS}, \cite[\S 11]{Zagier}). 
Hence, we consider the proper ideal classes of $\OOO_{f}$ 
to be the lattice in $\RR^2$ corresponding to a quadratic forms $F(x, y)$. 
On the other hand, any $2$-dimensional integral Euclidean lattice 
can be considered as some proper ideal class of $\OOO_{f}$. 
We define $\Lambda$ as the proper ideal classes of $\OOO_{f}$. 
Then, we prove the following theorems: 
\begin{thm}\label{thm:main1}
Let $n\in \NN$ and assume that $n \neq 1$. 
Let $p$ be a prime number such that there exists a $z\in \ZZ[\sqrt{-n}]$ 
with $\vert z\vert ^2 =p$, $\left(\frac{d_K}{p}\right)=1$ and $(p,f)=1$, 
where $\left(\frac{\cdot}{\cdot}\right)$ is the Legendre symbol. 
Then, 
\[\sharp \{z\in \ZZ[\sqrt{-n}] \mid \vert z\vert ^2=p^k\} =2(k+1). 
\]
\end{thm}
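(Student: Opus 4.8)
The plan is to realize $\ZZ[\sqrt{-n}]$ as an order in an imaginary quadratic field and then count the generators of the finitely many ideals of norm $p^{k}$. First I would write $n=m^{2}d$ with $d$ square-free, so that $K=\QQ(\sqrt{-d})$ and $\sqrt{-n}=m\sqrt{-d}$; comparing $\ZZ\cdot 1+\ZZ\cdot m\sqrt{-d}$ with the description $(\ref{eqn:L1})$ of $\OOO_{K}$ shows that $\ZZ[\sqrt{-n}]$ is exactly the order $\OOO_{f}$ of Lemma \ref{lem:order}, with conductor $f=m$ when $-d\equiv 2,3\pmod 4$ and $f=2m$ when $-d\equiv 1\pmod 4$. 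In particular $\vert z\vert^{2}=N(z)$ is the norm form of $\OOO_{f}$, so counting $z$ with $\vert z\vert^{2}=p^{k}$ amounts to counting elements of $\OOO_{f}$ of norm $p^{k}$.

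Next I would exploit the two arithmetic hypotheses. Because $(p,f)=1$, the prime $p$ is prime to the conductor, so by the theory of proper (invertible) ideals of orders (\cite{Cox}) the proper $\OOO_{f}$-ideals prime to $f$ correspond bijectively and multiplicatively to the $\OOO_{K}$-ideals prime to $f$, and they admit unique factorization into proper prime ideals. Since $\left(\frac{d_K}{p}\right)=1$, the prime $p$ splits in $\OOO_{K}$, and transporting this splitting to $\OOO_{f}$ gives $p\OOO_{f}=\pp\overline{\pp}$ with $\pp\neq\overline{\pp}$ two distinct proper prime ideals of norm $p$. The hypothesis that some $z_{0}\in\ZZ[\sqrt{-n}]$ satisfies $\vert z_{0}\vert^{2}=p$ then says that $(z_{0})$ is a principal ideal of norm $p$; from $(z_{0})\overline{(z_{0})}=(p)=\pp\overline{\pp}$ and the unique factorization I conclude $(z_{0})=\pp$ after relabeling, so that both $\pp=(z_{0})$ and $\overline{\pp}=(\overline{z_{0}})$ are principal.

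I would then carry out the count. Any $z\in\OOO_{f}$ with $N(z)=p^{k}$ generates a proper ideal $(z)$ of norm $p^{k}$; being prime to $f$ (its norm $p^{k}$ is prime to $f$), it factors uniquely, and multiplicativity of the norm forces $(z)=\pp^{a}\overline{\pp}^{\,b}$ with $a+b=k$. There are exactly $k+1$ such ideals, and each is principal, since $\pp^{a}\overline{\pp}^{\,b}=(z_{0}^{\,a}\overline{z_{0}}^{\,b})$. Thus the elements of norm $p^{k}$ are precisely the generators of these $k+1$ principal ideals, and two generators of the same ideal differ by a unit of $\OOO_{f}$.

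Finally, the hypothesis $n\neq 1$ enters through the unit group: every imaginary quadratic order other than $\ZZ[i]$ and the maximal order $\ZZ[\frac{-1+\sqrt{-3}}{2}]$ has unit group $\{\pm 1\}$, and for $n\neq 1$ the order $\ZZ[\sqrt{-n}]$ is neither of these (when $d=3$ it is the conductor-$2$ order, not the maximal one). Hence $\OOO_{f}^{\times}=\{\pm 1\}$ has order $2$, each principal ideal of norm $p^{k}$ has exactly two generators, and the total count is $2(k+1)$. I expect the main obstacle to be the careful bookkeeping for non-maximal orders, namely verifying that the splitting, principality, and unique factorization all survive passage from $\OOO_{K}$ to $\OOO_{f}$ under $(p,f)=1$, together with pinning down the unit group in the borderline case $d=3$, which is exactly what makes the hypothesis $n\neq 1$ (rather than merely $\sqrt{-n}\notin\ZZ$) necessary.
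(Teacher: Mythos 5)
Your proposal is correct and follows essentially the same route as the paper: identify $\ZZ[\sqrt{-n}]$ with the order $\OOO_f$, use $(p,f)=1$ and $\left(\frac{d_K}{p}\right)=1$ to split $p$ into two principal proper prime ideals, list the $k+1$ proper ideals of norm $p^k$ via unique factorization prime to the conductor, and count two generators per ideal. Your explicit verification that $\OOO_f^{\times}=\{\pm 1\}$ for $n\neq 1$ (including the borderline case $d=3$, where $\ZZ[\sqrt{-3}]$ is the non-maximal conductor-$2$ order) is a welcome clarification of a point the paper leaves implicit in its remark that $(z_1)$ and $(-z_1)$ coincide.
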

\begin{thm}\label{thm:main}
All the $2$-dimensional integral lattices in $\RR^{2}$ 
are universally concyclic. 
\end{thm}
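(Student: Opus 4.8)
The plan is to combine the counting result of Theorem \ref{thm:main1} with a center-shifting argument that turns a symmetric count of lattice points into an exact one. As recalled above, every $2$-dimensional integral lattice $\Lambda\subset\RR^2$ is, up to a similarity of $\RR^2$, a proper ideal $\aaa$ of some order $\OOO_f$ in an imaginary quadratic field $K=\QQ(\sqrt{-d})$, sitting inside $\CC\cong\RR^2$; since a similarity carries circles to circles bijectively, it preserves the property of being universally concyclic, so I may work with $\aaa$. Under this identification the number of points of $\Lambda$ on the origin-centered circle of squared radius $N$ is $\sharp\{z\in\aaa\mid |z|^2=N\}$, a representation number of the primitive positive definite integral form $F$ of discriminant $f^2d_K$ attached to $\aaa$ by the correspondence of Lemma \ref{lem:order} and the ideal--form dictionary recalled above. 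Thus it suffices to control these representation numbers and then to prescribe the count exactly.

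The second step supplies the even values. By Theorem \ref{thm:main1}, whose proof counts elements of prescribed norm in $\OOO_f$ and adapts verbatim to an arbitrary proper ideal class once $p$ is taken to be represented by the corresponding form, for every $k\geq 0$ one can choose a prime $p$ with $\left(\frac{d_K}{p}\right)=1$, $(p,f)=1$, and $p$ represented by $F$, and then the origin-centered circle of squared radius $p^k$ meets $\Lambda$ in exactly $2(k+1)$ points. Letting $k$ run over $0,1,2,\dots$ realizes every even $n=2(k+1)\geq 2$. The infinitude of admissible primes $p$ (needed here, and with extra congruences in the next step) follows from Chebotarev's density theorem applied to the ring class field of $\OOO_f$. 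The finitely many lattices with extra units, namely those similar to $\ZZ[i]$ or to the Eisenstein lattice, are either classical or handled by the same scheme with $2(k+1)$ replaced by $|\OOO_f^{\times}|(k+1)$.

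The third and decisive step produces the odd values and, more generally, cuts the symmetric configuration down to an exact number. The $2(k+1)$ points above occur in antipodal pairs $\pm z$, since $z\mapsto -z$ preserves $\Lambda$ and the distance to the origin; indeed any circle centered at a lattice point, or at a half-lattice point $c$ with $2c\in\Lambda$, inherits the involution $z\mapsto 2c-z$ and so passes through an even number of points. To break this symmetry I would move the center to a rational point $c$ with $2c\notin\Lambda$, for instance $c=\tfrac{1}{3}v$ for a primitive $v\in\Lambda$. Clearing denominators in the squared distance from $\Lambda$ to $c$, the points of $\Lambda$ on a circle about $c$ become the solutions of $F(A,B)=M$ subject to fixed congruences on $A,B$ modulo the denominator of $c$. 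Taking $M=p^k$ and choosing $p$ in a suitable residue class (possible by the infinitude from Step 2 together with Dirichlet), these congruences isolate exactly the desired subset of the $2(k+1)$ solutions, giving any prescribed odd value; the trivial case $n=1$ is obtained from a sufficiently small circle through a single lattice point.

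The main obstacle I anticipate is precisely this last piece of bookkeeping: one must show that the antipodal and conjugation symmetries act on the norm-$p^k$ elements of $\OOO_f$ so that the imposed congruences isolate an \emph{arbitrary} count rather than collapsing it into a coarser residue class. Concretely, writing $p=\pi\bar\pi$ in $\OOO_f$, the elements of norm $p^k$ are the $\pm\pi^i\bar\pi^{k-i}$ with $0\leq i\leq k$, and one must check that $i\mapsto(\pi^i\bar\pi^{k-i}\bmod v)$ takes enough distinct values and that $p$ can be chosen to place these residues where they are needed. This is exactly where the hypotheses $(p,f)=1$ and $\left(\frac{d_K}{p}\right)=1$ of Theorem \ref{thm:main1}, which guarantee the clean splitting $p=\pi\bar\pi$, do the real work. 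Once the congruence analysis is carried out for $c=\tfrac13 v$ and $c=\tfrac12 v$, combining the even counts of Step 2 with the odd counts of Step 3 yields a circle through exactly $n$ points for every integer $n>0$, which is the assertion.
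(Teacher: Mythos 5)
Your overall strategy --- realize $\Lambda$ as a scaled ideal of an order, use Theorem \ref{thm:main1} to get $2(k+1)$ points on origin-centered circles, then shift the center off the half-lattice to break the antipodal symmetry --- has the same general shape as the paper's argument, but the decisive step is missing, and you say so yourself. The paper does not split into an ``even case'' and an ``odd case'' at all: it produces, for every $k$, a single shifted circle through exactly $k+1$ points, which already covers every $n\geq 1$. The whole difficulty is to guarantee that the congruence imposed by the shifted center selects \emph{exactly one} element from each of the $k+1$ antipodal pairs of norm-$p^k$ elements, uniformly in $k$; your proposal leaves this as ``the main obstacle I anticipate'' and offers no mechanism to resolve it. Choosing $c=\frac{1}{3}v$ merely ensures that the relevant coset of $3\Lambda$ is not stable under $w\mapsto -w$, so the count need not be even; it does not determine the count, which a priori could be anything from $0$ to $2(k+1)$ and could drift with $k$. (Note also that $c=\frac{1}{2}v$, which you invoke at the end, has $2c\in\Lambda$ and therefore still carries the involution you are trying to kill.)

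The paper closes exactly this gap with Proposition \ref{prop:main}: applying Theorem \ref{thm:Cox2} to the ring class field of the order $\ZZ[\sqrt{-16a^2n}]$ and using Chebotarev, it chooses the prime so that $p=x_1^2+ny_1^2$ with $y_1\equiv 0\pmod{4a}$. Then every $z=x+y\sqrt{-n}$ of norm $p^k$ satisfies $y\equiv 0\pmod{4a}$ and $x\equiv \pm x_1^{k}\pmod{4a}$; that is, modulo $4a$ the $2(k+1)$ solutions fall into exactly the two classes $\pm j$ with $j\equiv x_1^k$, one class per antipodal pair. The circle $\vert 4\sqrt{a}z-j\vert^2=p_{n,a}^k$ (center shifted by $j/(4\sqrt{a})$, the factor $4\sqrt{a}$ chosen so that the scaled lattice lands inside $\ZZ[\sqrt{-n}]$) then picks out precisely the class $-j$, hence exactly $k+1$ points. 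This congruence engineering --- forcing the imaginary parts of all norm-$p^k$ elements to vanish mod $4a$ by prescribing how $p$ is represented by a larger order --- is the content your Step 3 needs and does not supply. A secondary point: your Step 2 asserts that Theorem \ref{thm:main1} ``adapts verbatim'' to an arbitrary proper ideal class; the paper sidesteps this entirely, since the map $z\mapsto 4\sqrt{a}z-j$ carries any integral lattice with form $ax^2+bxy+cy^2$ into $\ZZ[\sqrt{-n}]$ itself, so only the principal case of Theorem \ref{thm:main1} is ever used.
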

\begin{rem}
We remark that there exist some non-integral lattices 
which are not universally concyclic. 
Maehara also proved in \cite{M2} that 
if $\tau$ is a transcendental number, then $\Lambda[(1, \tau), (0, 1)]$ 
cannot contain four concyclic points, hence is not universally concyclic. 
The rectangular lattice $\Lambda[(\alpha, 0), (0, \beta)]$ does not contain 
five concyclic points if and only if $(\alpha /\beta )^{2}$ is an irrational number. 
Hence, some additional integrality 
conditions are necessary to ensure this property. 
\end{rem}

\section{Preliminaries}\label{section:Preliminaries}
In this paper, 
we consider the $2$-dimensional integral Euclidean lattices. 
We shall always assume that $d$ denotes a positive
square-free integer. 
Let $K=\Q(\sqrt{-d})$ be an imaginary quadratic field, 
and let $\mathcal{O}_{K}$ be its ring of algebraic integers 
defined by (\ref{eqn:L1}). 
As we mentioned in Section 1, 
there exists a one-to-one correspondence 
between the set of fractional ideal classes of the unique quadratic field 
$\QQ(\sqrt{-d})$ and 
the equivalence class of 
primitive positive definite integral quadratic forms $F(x, y)$ 
with discriminant $d_{K}<0$ \cite[\S 10]{Zagier}. 
More generally, there exists a one-to-one correspondence 
between the set of fractional proper ideal classes 
of order $\OOO_{f}$ and 
the equivalence class of primitive positive definite integral quadratic forms 
$F(x, y)$ 
with discriminant $f^{2}d_{K}<0$ \cite[Chapter 2, \S 7-6]{BS}, \cite[\S 11]{Zagier}. 
We remark that the value $f^{2}d_{K}$ is called the discriminant of 
the order $\mathcal{O}_f$. 
Finally, we give the well-known theorems needed later. 
\begin{thm}[cf.~{\cite[page~104]{Cox}}]\label{thm:facprime}
We can classify prime ideals of a quadratic field as follows{\rm :} 
\begin{enumerate}
\item If $p$ is an odd prime and $\left(\frac{d_{K}}{p}\right)=1$ 
$($resp. $d_{K}\equiv 1 \pmod{8}$$)$ then 
\[
(p)=\mathfrak{p} \mathfrak{p}^{\prime}\ (resp.\ (2)=\mathfrak{p} \mathfrak{p}^{\prime}), 
\]
where $\mathfrak{p}$ and $\mathfrak{p}^{\prime}$ are prime ideals with $\mathfrak{p}\neq \mathfrak{p}^{\prime}$, 
$N(\mathfrak{p})=N(\mathfrak{p}^{\prime})=p$ $($resp. $N(\mathfrak{p})=2$$)$. 
\item If $p$ is an odd prime and $\left(\frac{d_{K}}{p}\right)=-1$ 
$($resp. $d_{K}\equiv 5 \pmod{8}$$)$ then 
\[
(p)=\mathfrak{p}\ (resp.\ (2)=\mathfrak{p}), 
\]
where $\mathfrak{p}$ is a prime ideal with $N(\mathfrak{p})=p^{2}$ $($resp. $N(\mathfrak{p})=4$$)$. 
\item If $p\ \vert\ d_{k}$ then 
\[
(p)=\mathfrak{p}^{2}, 
\]
where $\mathfrak{p}$ is a prime ideal with $N(\mathfrak{p})=p$. 
\end{enumerate}
\end{thm}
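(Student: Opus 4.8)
The plan is to use the dictionary between integral lattices and orders in imaginary quadratic fields set up in the Introduction. First I would invoke the stated one-to-one correspondence to write the given integral lattice $\Lambda$, up to the obvious similarity (rotation and scaling, which carries circles to circles and preserves the number of lattice points lying on them), as a proper ideal $\mathfrak{a}$ of an order $\mathcal{O}_f$ in some $K=\QQ(\sqrt{-d})$, embedded in $\CC=\RR^{2}$. The basic counting tool is then the remark that, for a rational center $\gamma=\delta/t$ with $\delta\in\mathfrak{a}$ and $t\in\NN$, a point $\alpha\in\mathfrak{a}$ lies on the circle $|z-\gamma|^{2}=\rho$ precisely when $|t\alpha-\delta|^{2}=t^{2}\rho$; writing $\beta=t\alpha-\delta$, the number of lattice points on the circle equals the number of elements $\beta$ of the coset $-\delta+t\mathfrak{a}$ with $N(\beta)=t^{2}\rho$. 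For $\gamma=0$ this reduces, via unique factorization of ideals and Theorem~\ref{thm:facprime}, to counting integral ideals of prescribed norm in the fixed ideal class $C=[\mathfrak{a}]$, since an $\alpha\in\mathfrak{a}$ with $N(\alpha)=m\,N(\mathfrak{a})$ corresponds, up to the $w=\#\mathcal{O}_f^{\times}$ units, to an integral ideal $\mathfrak{b}$ with $N(\mathfrak{b})=m$ and $[\mathfrak{b}]=C^{-1}$.

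For even $n=2s$ I would use circles centered at the origin together with this ideal-counting dictionary. Theorem~\ref{thm:main1} already settles the principal class, where radius$^{2}=p^{k}$ gives exactly $2(k+1)$ points. For a non-ambiguous class $C$ (so $C\neq C^{-1}$) I would refine this by means of Theorem~\ref{thm:facprime}: choosing, by Dirichlet's theorem for ideal classes, a split prime $q$ with $[\mathfrak{q}]=C^{-1}$ together with an auxiliary split principal prime $p$, and taking radius$^{2}=q\,p^{\,s-1}N(\mathfrak{a})$, the integral ideals of that norm lying in $C^{-1}$ are precisely $\mathfrak{q}\,\mathfrak{p}^{\,i}\bar{\mathfrak{p}}^{\,s-1-i}$ for $0\le i\le s-1$, exactly $s$ of them, so the circle carries $w\cdot s=2s$ points. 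This produces every even value for every non-ambiguous class. The ambiguous ($2$-torsion) classes, and the two extra-unit discriminants $d_{K}=-3,-4$ (the hexagonal and square lattices, already known to be universally concyclic), are treated separately below.

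For odd $n$, and for the ambiguous classes, I would shift the center. The general device is to pick a rational center $\gamma=\delta/t$ with $2\delta\notin t\mathfrak{a}$, so that the coset $-\delta+t\mathfrak{a}$ is not invariant under $\beta\mapsto-\beta$; the representations of a fixed norm then need not occur in pairs, and an odd total becomes possible. When $C$ is ambiguous this is transparent: the lattice then carries complex conjugation as a reflection symmetry, so centering on the real axis pairs up the non-real solutions, while a congruence condition imposed by the denominator $t$ (the analogue of Schinzel's choice $t=3$) leaves exactly one real solution on the circle; letting the power of the auxiliary split prime grow then realizes every value, even and odd. The crux of the theorem is the remaining case: a non-ambiguous class, for which no reflection symmetry exists and the pairing argument is unavailable. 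Here I expect to control the parity of the number of norm-$t^{2}\rho$ representatives in the coset $-\delta+t\mathfrak{a}$ purely arithmetically, by a local analysis at the split prime $p$---where $\mathfrak{a}\otimes\ZZ_{p}$ is free and $\mathcal{O}_f\otimes\ZZ_{p}\cong\ZZ_{p}\times\ZZ_{p}$---generalizing Schinzel's explicit $\bmod\,3$ computation. Proving that this count is odd and that all odd values are attained as $k$ varies is the main obstacle.
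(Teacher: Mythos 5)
Your proposal does not prove the statement at hand. The statement is Theorem~\ref{thm:facprime}, the classical law for how a rational prime $p$ factors in the ring of integers $\mathcal{O}_{K}$ of a quadratic field according to the value of $\left(\frac{d_{K}}{p}\right)$ (and, for $p=2$, according to $d_{K} \bmod 8$). What you have written instead is an outline of a proof of Theorem~\ref{thm:main}, the universal concyclicity of $2$-dimensional integral lattices: your entire argument is about counting lattice points on circles via ideals of prescribed norm in a fixed ideal class, split into the even case, the odd case, and the ambiguous versus non-ambiguous classes. Worse, your outline explicitly \emph{invokes} Theorem~\ref{thm:facprime} as an ingredient (``via unique factorization of ideals and Theorem~\ref{thm:facprime}''), so even if one tried to read it charitably as aimed at the stated theorem, it would be circular: you cannot establish the splitting law by assuming it. Note also that the paper itself offers no proof of this theorem; it is quoted as a standard result from Cox (page~104), which is consistent with its role as a tool rather than a target.

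What an actual proof would look like, in brief: one uses $\mathcal{O}_{K}=\ZZ\cdot 1+\ZZ\cdot w_{K}$ with $w_{K}$ as in the paper's equation~(\ref{eqn:L}), and applies the Dedekind--Kummer criterion, factoring the minimal polynomial of $w_{K}$ modulo $p$. For odd $p$ the polynomial $x^{2}+d$ (resp. $x^{2}+x+\frac{1+d}{4}$) has two distinct roots, no root, or a double root modulo $p$ exactly according as $\left(\frac{d_{K}}{p}\right)$ equals $1$, $-1$, or $p \mid d_{K}$, yielding $(p)=\mathfrak{p}\mathfrak{p}^{\prime}$ with $\mathfrak{p}\neq\mathfrak{p}^{\prime}$, $(p)$ inert, or $(p)=\mathfrak{p}^{2}$ respectively; the norm statements follow since $N(\mathfrak{p})=p^{f}$ with $f$ the residue degree and $N((p))=p^{2}$. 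For $p=2$ one checks the factorization of $x^{2}+x+\frac{1+d}{4}$ modulo $2$, which splits or is irreducible according as $d_{K}\equiv 1$ or $5 \pmod 8$, with the ramified case absorbed into $2\mid d_{K}$. None of this machinery appears in your proposal, so as a proof of the stated theorem it is a complete miss, however reasonable it may be as a sketch toward Theorem~\ref{thm:main}.
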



\begin{thm}[cf.~{\cite[Theorem~7.7]{Cox}}]\label{thm:Cox1}
Let $\mathcal{O}$ be an order of discriminant $D$ 
in an imaginary quadratic field $K$. 
\begin{enumerate}
\item 
If $F(x,y)=ax^2+bxy+cy^2$ is a primitive positive definite integral 
quadratic form of discriminant $D$, then 
$[a, (-b+\sqrt{D})/2]$ is a proper ideal of $\mathcal{O}$. 
\item 
The map sending $F(x,y)$ to $[a,(-b+\sqrt{D})/2]$ induces 
an isomorphism between the form class group 
and 
the ideal class group. 
\item 
A positive integer $m$ is represented by a form 
$F(x,y)$ if and only if $m$ is the norm $N(\mathfrak{a})$ 
of some ideal $\mathfrak{a}$ in the corresponding ideal class 
mentioned in 2. 
\end{enumerate}
\end{thm}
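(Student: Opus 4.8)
The plan is to construct an explicit dictionary between primitive positive definite forms of discriminant $D$ and proper fractional $\OOO$-ideals, using the quadratic root attached to a form together with the notion of the \emph{ring of multipliers} of a lattice, and then to verify that this dictionary is a group isomorphism and that it matches representation by forms with ideal norms. For part (1), given $F(x,y)=ax^2+bxy+cy^2$ with $a>0$ and $D=b^2-4ac<0$, I would let $\tau_F=\frac{-b+\sqrt{D}}{2a}$ be the root of $F(x,1)$ in the upper half-plane and set $L=[1,\tau_F]=\ZZ+\ZZ\tau_F$. Since $a\tau_F=\frac{-b+\sqrt{D}}{2}$ satisfies the monic equation $\omega^2+b\omega+ac=0$ of discriminant $D$, a direct computation of $\{\lambda\in K : \lambda L\subseteq L\}$ shows that this ring of multipliers equals $[1,a\tau_F]=\ZZ+\ZZ\frac{-b+\sqrt{D}}{2}$, which is exactly the order $\OOO$ of discriminant $D$; the primitivity $\gcd(a,b,c)=1$ is precisely what prevents a strictly larger order from occurring. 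As $[a,(-b+\sqrt{D})/2]=aL$ is a $K^{\times}$-multiple of $L$, it has the same ring of multipliers $\OOO$, and is therefore a fractional $\OOO$-ideal whose multiplier ring is exactly $\OOO$, i.e. a proper ideal.

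For the bijection in part (2), I would first check that the map descends to classes. If $F'=F\circ\gamma$ with $\gamma=\begin{pmatrix}p&q\\r&s\end{pmatrix}\in\mathrm{SL}_2(\ZZ)$, then $\tau_{F'}$ is the Möbius transform of $\tau_F$ under $\gamma$, so $[1,\tau_{F'}]$ is a $K^{\times}$-multiple of $[1,\tau_F]$ and the two attached ideals lie in the same ideal class. Surjectivity follows by running the construction backwards: given a proper ideal $\aaa$ with an oriented $\ZZ$-basis $[\alpha,\beta]$ (meaning $\mathrm{Im}(\beta/\alpha)>0$), the norm form $F(x,y)=N(\alpha x+\beta y)/N(\aaa)$ is a primitive positive definite form of discriminant $D$ that maps back to $[\aaa]$, properness of $\aaa$ being what guarantees primitivity. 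Injectivity is the converse of the well-definedness computation: if $\aaa_F$ and $\aaa_{F'}$ are in the same class, the resulting change of basis produces the element of $\mathrm{SL}_2(\ZZ)$ carrying $F$ to $F'$.

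The substantive point of part (2) is that this bijection is a group homomorphism, i.e. that the product of the ideals attached to $F_1$ and $F_2$ corresponds to the Dirichlet composite $F_1\ast F_2$. I would verify this by selecting forms in each class with a common middle coefficient $B$, writing them as $a_1x^2+Bxy+a_2Cy^2$ and $a_2x^2+Bxy+a_1Cy^2$ (concordant, or ``united'', forms), multiplying the corresponding proper ideals, and recovering the norm form of the product as $a_1a_2x^2+Bxy+Cy^2$. For part (3), I would use $N(\aaa_F)=[\OOO:\aaa_F]=a$ together with the identity $N\!\left(ax+\tfrac{-b+\sqrt{D}}{2}y\right)=a\,F(x,-y)$, whose values as $(x,y)$ range over $\ZZ^2$ coincide with those of $F$. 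If $F(x_0,y_0)=m$, then $\mu_0=ax_0+\tfrac{-b+\sqrt{D}}{2}y_0\in\aaa_F$ has norm $am$, so $(\mu_0)\aaa_F^{-1}$ is an integral ideal of norm $m$ in the class corresponding to $F$; conversely, an integral ideal of norm $m$ in that class furnishes a generator yielding a representation $F(x_0,y_0)=m$, with the conjugation/inversion bookkeeping arranged as in \cite[Ch.~7]{Cox}.

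The main obstacle is the homomorphism step in part (2): proving that multiplication of proper $\OOO$-ideals matches Dirichlet composition of forms requires the delicate united-forms computation and the verification that concordant representatives can always be chosen within the given classes. By contrast, the set-theoretic bijection and the representation statement are essentially transcriptions of the norm-form and ring-of-multipliers dictionary established in parts (1) and the first half of (2).
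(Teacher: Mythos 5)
The paper offers no proof of this statement at all: it is quoted as a known result directly from \cite[Theorem~7.7]{Cox}, so there is no internal argument to compare against. Your sketch correctly reproduces the standard proof from that source --- the root $\tau_F=(-b+\sqrt{D})/(2a)$ and the ring-of-multipliers computation for part (1), the oriented-basis/norm-form dictionary for the bijection, the united-forms (Dirichlet composition) computation for the homomorphism property, and the identity $N\bigl(ax+\tfrac{-b+\sqrt{D}}{2}y\bigr)=a\,F(x,-y)$ with $N(\mathfrak{a}_F)=a$ for part (3) --- so it is essentially the same approach as the proof the paper delegates to Cox.
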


\begin{lem}[cf.~{\cite[Lemma~7.18]{Cox}}]\label{lem:Cox}
Let $\mathcal{O}_f$ be an order of conductor $f$. 
We say that a non-zero $\mathcal{O}_f$-ideal 
$\mathfrak{a}$ is prime to $f$ provided that $\mathfrak{a}+f\mathcal{O}_f=\mathcal{O}_f$. 
\begin{enumerate}
\item 
An $\mathcal{O}_f$-ideal $\mathfrak{a}$ is prime to $f$ 
if and only if its norm $N(\mathfrak{a})$ is 
relatively prime to $f$. 
\item 
Every $\mathcal{O}_f$-ideal prime to $f$ is proper. 
\end{enumerate}
\end{lem}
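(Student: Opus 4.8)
The plan is to treat the two assertions separately, basing everything on the identification of the norm with an index: for a nonzero integral ideal $\aaa\subseteq\OOO_f$ one has $N(\aaa)=[\OOO_f:\aaa]$, so that $G:=\OOO_f/\aaa$ is a finite abelian group of order $N(\aaa)$. The defining condition $\aaa+f\OOO_f=\OOO_f$ translates, after applying the quotient map $\pi\colon\OOO_f\to G$, into $\pi(f\OOO_f)=fG=G$; that is, $\aaa$ is prime to $f$ if and only if multiplication by $f$ is surjective on $G$. This reformulation is the engine of the whole argument.

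For part (1), I would argue as follows. If $(N(\aaa),f)=1$, then since $f$ is coprime to $|G|=N(\aaa)$, multiplication by $f$ is an automorphism of the finite group $G$, so $fG=G$ and $\aaa$ is prime to $f$. Conversely, suppose $\aaa$ is prime to $f$ and that some prime $p$ divides both $N(\aaa)$ and $f$. From $p\mid f$ we get $f\OOO_f\subseteq p\OOO_f$, whence $\aaa+p\OOO_f=\OOO_f$ and therefore $pG=G$; being a surjective endomorphism of a finite group, multiplication by $p$ is then an automorphism of $G$. But $p\mid|G|$ forces $G$ to contain an element of order $p$ by Cauchy's theorem, and such an element is killed by multiplication by $p$, contradicting injectivity. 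Hence $(N(\aaa),f)=1$.

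For part (2), let $\OOO':=\{\beta\in K\mid\beta\aaa\subseteq\aaa\}$. Since $\aaa$ is an $\OOO_f$-ideal we have $\OOO_f\subseteq\OOO'$, and any $\beta\in\OOO'$ stabilises the nonzero finitely generated $\ZZ$-module $\aaa$, hence is integral over $\ZZ$ and so lies in $\OOO_K$; thus $\OOO_f\subseteq\OOO'\subseteq\OOO_K$, and properness amounts to the reverse inclusion $\OOO'\subseteq\OOO_f$. Because $\aaa$ is prime to $f$, write $1=a+f\gamma$ with $a\in\aaa$ and $\gamma\in\OOO_f$. For $\beta\in\OOO'$ this gives $\beta=\beta a+f\beta\gamma$. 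Here $\beta a\in\beta\aaa\subseteq\aaa\subseteq\OOO_f$; moreover $\beta\in\OOO_K$ together with the description $\OOO_f=\ZZ+f\OOO_K$ of Lemma~\ref{lem:order} yields $f\beta\in f\OOO_K\subseteq\OOO_f$, so $f\beta\gamma=(f\beta)\gamma\in\OOO_f$ since $\OOO_f$ is a ring. Adding, $\beta\in\OOO_f$. Hence $\OOO'=\OOO_f$ and $\aaa$ is proper.

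The routine inputs (norm equals index, orders sit inside $\OOO_K$, Cauchy's theorem, surjective endomorphisms of finite groups are bijective) are standard, so there is no serious analytic or combinatorial obstacle. The one point that genuinely drives the proof, and the step I would flag as the crux, is the observation in part (2) that $f\OOO_K\subseteq\OOO_f$: this is exactly what lets the B\'ezout-type decomposition $1=a+f\gamma$ absorb the ``denominator'' introduced by an arbitrary $\beta\in\OOO_K$, and without the conductor factor $f$ multiplying $\gamma$ the argument would break down. The only place requiring genuine care is checking that $N(\aaa)=[\OOO_f:\aaa]$ and that the quotient identifications $\pi(f\OOO_f)=fG$ are literally correct, but these are formal.
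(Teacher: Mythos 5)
Your proof is correct, and it is essentially the argument the paper relies on: the paper states this lemma without proof, citing \cite[Lemma~7.18]{Cox}, and Cox's proof is exactly your two steps --- for (1), identifying $N(\mathfrak{a})=[\mathcal{O}_f:\mathfrak{a}]$ and noting that $\mathfrak{a}+f\mathcal{O}_f=\mathcal{O}_f$ is equivalent to multiplication by $f$ being surjective (hence bijective) on the finite quotient, and for (2), using $1=a+f\gamma$ together with the key inclusion $f\mathcal{O}_K\subseteq\mathcal{O}_f=\ZZ+f\mathcal{O}_K$ to pull any $\beta$ stabilizing $\mathfrak{a}$ back into $\mathcal{O}_f$. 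Nothing further is needed; the crux you flag ($f\beta\in\mathcal{O}_f$ for $\beta\in\mathcal{O}_K$) is indeed the decisive point in the reference's proof as well.
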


\begin{prop}[cf.~{\cite[Proposition~7.20]{Cox}}]\label{prop:cox1}
Let $\mathcal{O}_f$ be an order of conductor $f$ 
in an imaginary quadratic field $K$. 
We say that a non-zero $\mathcal{O}_K$-ideal 
$\mathfrak{a}$ is prime to $f$ provided that $\mathfrak{a}+f\mathcal{O}_K=\mathcal{O}_K$. 
If $\mathfrak{a}$ is an $\mathcal{O}_{K}$-ideal prime to $f$, 
then $\mathfrak{a}\cap\mathcal{O}_f$ is an $\mathcal{O}_f$-ideal prime to 
$f$ of the same norm. 

\end{prop}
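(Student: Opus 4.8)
The plan is to verify the three assertions in turn: that $\aaa\cap\OOO_f$ is an $\OOO_f$-ideal, that it has the same norm as $\aaa$, and that it is prime to $f$. The norm equality will be the engine that drives the primality claim through Lemma~\ref{lem:Cox}, so the argument is organized to establish it first.

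First I would dispose of the ideal property. Since $\OOO_f\subset\OOO_K$, the $\OOO_K$-ideal $\aaa$ is in particular an $\OOO_f$-submodule of $K$. For $\alpha\in\OOO_f$ and $x\in\aaa\cap\OOO_f$ one has $\alpha x\in\aaa$ (because $\aaa$ is an $\OOO_K$-ideal and $\alpha\in\OOO_K$) and $\alpha x\in\OOO_f$ (because $\OOO_f$ is a ring), so $\alpha x\in\aaa\cap\OOO_f$; together with additive closure this shows $\aaa\cap\OOO_f$ is an $\OOO_f$-ideal. It is nonzero because $\aaa$, being a nonzero $\OOO_K$-ideal, contains a nonzero rational integer, and every rational integer lies in $\OOO_f$.

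The decisive step is the identity $\aaa+\OOO_f=\OOO_K$. This follows at once from the hypothesis $\aaa+f\OOO_K=\OOO_K$ together with the containment $f\OOO_K\subset\OOO_f$ coming from $\OOO_f=\ZZ+f\OOO_K$ in Lemma~\ref{lem:order}: indeed
\[
\OOO_K=\aaa+f\OOO_K\subseteq\aaa+\OOO_f\subseteq\OOO_K,
\]
forcing equality. Granting this, the second isomorphism theorem applied to the additive groups $\OOO_f$ and $\aaa$ inside $\OOO_K$ gives
\[
\OOO_f/(\OOO_f\cap\aaa)\;\cong\;(\OOO_f+\aaa)/\aaa\;=\;\OOO_K/\aaa,
\]
and comparing cardinalities yields $N(\aaa\cap\OOO_f)=N(\aaa)$, which is the asserted norm equality (and simultaneously confirms that $\aaa\cap\OOO_f$ has finite index in $\OOO_f$, so its norm is well defined).

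Finally I would deduce primality to $f$ from the norm. Since $\aaa$ is prime to $f$, the ideals $\aaa$ and $f\OOO_K$ are comaximal in the Dedekind domain $\OOO_K$, so their norms $N(\aaa)$ and $N(f\OOO_K)=f^2$ are coprime; hence $\gcd(N(\aaa),f)=1$. By the norm equality just proved, $N(\aaa\cap\OOO_f)=N(\aaa)$ is therefore relatively prime to $f$, and Lemma~\ref{lem:Cox}(1) then gives that $\aaa\cap\OOO_f$ is prime to $f$. The point requiring care — and what I expect to be the main obstacle — is precisely the route to this last claim: a direct attempt to exhibit $1\in(\aaa\cap\OOO_f)+f\OOO_f$ fails, because the element $f\gamma$ produced from a relation $1=a+f\gamma$ lies only in $f\OOO_K$, not in the strictly smaller module $f\OOO_f$. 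Routing the argument through the norm, and invoking the norm criterion of Lemma~\ref{lem:Cox}, avoids this obstruction entirely.
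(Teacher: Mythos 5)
Your architecture is exactly that of the proof in Cox (Proposition~7.20), which is all the paper itself offers (it quotes the result without proof): the containment $f\OOO_K\subseteq\OOO_f$ from Lemma~\ref{lem:order} yields $\aaa+\OOO_f=\OOO_K$, the second isomorphism theorem yields $N(\aaa\cap\OOO_f)=N(\aaa)$, and the norm criterion of Lemma~\ref{lem:Cox}(1) converts coprimality of the norm into primality to $f$. However, the one step you justify is justified by a false principle: comaximal ideals of a Dedekind domain need \emph{not} have coprime norms. In $\ZZ[i]$ the ideals $(1+2i)$ and $(1-2i)$ are distinct maximal ideals, hence comaximal, yet both have norm $5$. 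So the inference ``$\aaa$ and $f\OOO_K$ comaximal $\Rightarrow$ $N(\aaa)$ and $N(f\OOO_K)=f^2$ coprime'' is invalid as stated, and since everything downstream hangs on $\gcd(N(\aaa),f)=1$, this is a genuine gap rather than a cosmetic one.

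The instance you need is true and admits a short correct proof, so the gap is repairable. From $\aaa+f\OOO_K=\OOO_K$, multiplication by $f$ is surjective, hence bijective, on the finite group $\OOO_K/\aaa$; a finite abelian group on which multiplication by $f$ is bijective has order prime to $f$ (for a prime $p$ dividing $f$, the $p$-primary part $G_p$ would satisfy $fG_p\subseteq pG_p\subsetneq G_p$ if $G_p\neq 0$), whence $\gcd(N(\aaa),f)=1$. This is the same device by which Cox proves the norm criterion of Lemma~\ref{lem:Cox} itself. Alternatively: a common prime divisor $p$ of $f$ and $N(\aaa)$ forces some prime ideal $\pp$ of $\OOO_K$ over $p$ to divide $\aaa$, and then $\aaa+f\OOO_K\subseteq\pp\neq\OOO_K$, a contradiction. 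Incidentally, the direct approach you dismissed as impossible does in fact work, via a squaring trick: writing $1=a+f\gamma$ with $a\in\aaa$, $\gamma\in\OOO_K$, squaring gives $1=a(a+2f\gamma)+f\cdot f\gamma^2$; here $a=1-f\gamma$ and $a+2f\gamma=1+f\gamma$ lie in $\OOO_f$ because $f\OOO_K\subseteq\OOO_f$, so $a(a+2f\gamma)\in\aaa\cap\OOO_f$, while $f\gamma^2\in f\OOO_K\subseteq\OOO_f$ gives $f\cdot f\gamma^2\in f\OOO_f$. Hence $1\in(\aaa\cap\OOO_f)+f\OOO_f$, exhibiting primality to $f$ without any appeal to norms.
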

\begin{prop}[cf.~{\cite[Exercise~7.26]{Cox}}]\label{prop:cox2}
Let $\mathcal{O}_f$ be an order of conductor $f$. 
Then $\mathcal{O}_f$-ideals prime to 
the conductor can be factored uniquely into prime $\mathcal{O}_f$-ideals 
$($which are also prime to $f$$)$. 
\end{prop}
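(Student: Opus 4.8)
The plan is to reduce unique factorization of $\mathcal{O}_f$-ideals prime to the conductor to unique factorization in the maximal order $\mathcal{O}_K$, which, being the full ring of integers of $K$, is a Dedekind domain. The bridge will be a multiplicative bijection between the monoid of nonzero $\mathcal{O}_f$-ideals prime to $f$ and the monoid of nonzero $\mathcal{O}_K$-ideals prime to $f$; one half of this bijection is already supplied by Proposition \ref{prop:cox1}. Once this bijection is seen to be an isomorphism of multiplicative monoids, the unique factorization statement transfers formally from $\mathcal{O}_K$ to $\mathcal{O}_f$.

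First I would set up the two maps. In one direction, Proposition \ref{prop:cox1} sends an $\mathcal{O}_K$-ideal $\mathfrak{a}$ prime to $f$ to the $\mathcal{O}_f$-ideal $\mathfrak{a}\cap\mathcal{O}_f$, which is again prime to $f$ and of the same norm. In the reverse direction I would take an $\mathcal{O}_f$-ideal $\mathfrak{b}$ prime to $f$ and extend it to $\mathfrak{b}\mathcal{O}_K$. The structural input is that $f\mathcal{O}_K\subseteq\mathcal{O}_f\subseteq\mathcal{O}_K$, so the finite $\ZZ$-module $\mathcal{O}_K/\mathcal{O}_f$ is annihilated by $f$ and is therefore supported only at the primes dividing $f$. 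Consequently, for every rational prime $\ell\nmid f$ the localizations coincide, $(\mathcal{O}_f)_\ell=(\mathcal{O}_K)_\ell$, and this common local ring is a (semilocal) Dedekind ring.

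Next I would prove that the two maps are mutually inverse and multiplicative by working prime-by-prime. By Lemma \ref{lem:Cox}, an ideal on either side is prime to $f$ exactly when its norm is coprime to $f$; such an ideal localizes to the unit ideal at every prime $\ell\mid f$ (its image meets the Jacobson radical trivially, so Nakayama forces it to be all of the local ring) and is completely determined by its localizations at the primes $\ell\nmid f$, where the two orders agree. Hence both monoids are canonically identified with the same restricted product $\bigoplus_{\ell\nmid f}\{\text{ideals of }(\mathcal{O}_f)_\ell=(\mathcal{O}_K)_\ell\}$, under which $\mathfrak{a}\mapsto\mathfrak{a}\cap\mathcal{O}_f$ and $\mathfrak{b}\mapsto\mathfrak{b}\mathcal{O}_K$ are inverse to each other. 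In particular $(\mathfrak{a}\cap\mathcal{O}_f)\mathcal{O}_K=\mathfrak{a}$ and $(\mathfrak{b}\mathcal{O}_K)\cap\mathcal{O}_f=\mathfrak{b}$, and both maps respect products, giving an isomorphism of multiplicative monoids.

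Finally I would transport unique factorization across this isomorphism. Since $\mathcal{O}_K$ is Dedekind, every nonzero $\mathcal{O}_K$-ideal prime to $f$ factors uniquely into prime $\mathcal{O}_K$-ideals, and each factor is again prime to $f$ because a product has norm coprime to $f$ precisely when every factor does. A monoid isomorphism carries prime (equivalently, irreducible) elements to prime elements and preserves factorizations, so the image of each prime $\mathcal{O}_K$-ideal prime to $f$ is a prime $\mathcal{O}_f$-ideal prime to $f$, and the unique factorization statement carries over verbatim, yielding the Proposition. The step I expect to be the main obstacle is the multiplicativity $(\mathfrak{a}_1\mathfrak{a}_2)\cap\mathcal{O}_f=(\mathfrak{a}_1\cap\mathcal{O}_f)(\mathfrak{a}_2\cap\mathcal{O}_f)$: the inclusion $\supseteq$ is immediate, but the reverse inclusion is not formal for the non-maximal order $\mathcal{O}_f$ and is exactly where the hypothesis ``prime to the conductor'' is indispensable. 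The local agreement $(\mathcal{O}_f)_\ell=(\mathcal{O}_K)_\ell$ for $\ell\nmid f$ is what makes it go through, since it reduces the identity to a one-prime-at-a-time verification in a Dedekind local ring, where multiplicativity of extension and contraction is standard.
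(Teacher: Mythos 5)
The paper gives no proof of this proposition at all: it is quoted from Cox (Exercise~7.26) as a known preliminary, with Proposition~\ref{prop:cox1} (Cox's Proposition~7.20) stated alongside as the companion fact, so there is no in-paper argument to compare against. Your proof is correct and is precisely the standard solution to that exercise: the norm-preserving correspondence $\mathfrak{a}\mapsto\mathfrak{a}\cap\mathcal{O}_f$, $\mathfrak{b}\mapsto\mathfrak{b}\mathcal{O}_K$ becomes a multiplicative bijection after localizing away from the conductor, where $(\mathcal{O}_f)_\ell=(\mathcal{O}_K)_\ell$ because $f\mathcal{O}_K\subseteq\mathcal{O}_f$, and unique factorization then transfers from the Dedekind domain $\mathcal{O}_K$; the only cosmetic point is that instead of appealing to ``a monoid isomorphism carries primes to primes'' (prime and irreducible need not coincide in a general commutative monoid, though they do here since the monoid is free) you can simply observe that the contraction of a prime ideal is prime, and that any prime $\mathcal{O}_f$-ideal occurring in a factorization of an ideal prime to $f$ contains it and is therefore itself prime to $f$.
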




\begin{thm}[cf.~{\cite[Theorem~9.4]{Cox}}]\label{thm:Cox2}
Let $n>0$ be an integer, and $L$ be the 
ring class field of the order $\ZZ[\sqrt{-n}]$ 
in the imaginary quadratic field $K=\QQ(\sqrt{-n})$. 
If $p$ is an odd prime not dividing $n$, 
then 
\[
p=x^2+ny^2\Leftrightarrow p {\it \ splits\ completely\ in\ }L. 
\] 
\end{thm}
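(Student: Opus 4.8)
The plan is to use that the property of being universally concyclic is invariant under similarities of the plane, since translations, rotations, and scalings all carry circles to circles. By the lattice--form correspondence recalled in Section~\ref{section:Preliminaries} together with Theorem~\ref{thm:Cox1}, every $2$-dimensional integral lattice is similar to a proper ideal $\aaa$ of some order $\OOO_f$ in an imaginary quadratic field $K=\QQ(\sqrt{-d})$, and we may arrange $\aaa$ to be stable under complex conjugation $z\mapsto\bar z$. Viewing $\aaa\subset\CC=\RR^{2}$, the number of lattice points on the circle of radius $r$ about a center $c$ is $\sharp\{z\in\aaa\mid|z-c|^{2}=r^{2}\}$, so it suffices to realize every positive integer as such a count; the case $n=1$ is immediate from a generic one-point circle. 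When $c$ is real, $z\mapsto\bar z$ permutes the lattice points on the circle, pairing the non-real ones and fixing the real ones, so the total count has the shape $2A+B$, where $B$ is the number of real (self-conjugate) lattice points on the circle. I would organize the two parities around this decomposition.

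For even $n=2(k+1)$ I would center the circle at $0\in\aaa$, so that the count is $\sharp\{z\in\aaa\mid|z|^{2}=r^{2}\}$ and any real points occur in the pair $\pm z$, keeping the total even. By Theorem~\ref{thm:Cox2} there are infinitely many primes $p=x^{2}+ny^{2}$, namely those splitting completely in the ring class field of $\OOO_f$; discarding finitely many, fix such a $p$ with $(p,f)=1$ and $\left(\frac{d_K}{p}\right)=1$, so that $p=\pp\bar\pp$ with $\pp$ a principal prime of $\OOO_f$ prime to the conductor (Theorems~\ref{thm:facprime} and~\ref{thm:Cox1}, Lemma~\ref{lem:Cox}). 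After fixing an auxiliary ideal $\mathfrak{b}_{0}$ representing the inverse class $[\aaa]^{-1}$ and prime to $pf$, I would set $r^{2}=N(\aaa)N(\mathfrak{b}_{0})\,p^{k}$; then by unique factorization of ideals prime to $f$ (Proposition~\ref{prop:cox2}) the principal ideals $(z)=\aaa\,\mathfrak{b}_{0}\,\pp^{\,i}\bar\pp^{\,k-i}$ with $z\in\aaa$ and $N(z)=r^{2}$ correspond to the $k+1$ exponents $0\le i\le k$, and counting the two unit multiples $\pm z$ yields exactly $2(k+1)$ points, exactly as in the proof of Theorem~\ref{thm:main1}. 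Letting $k$ vary covers all even $n\ge 2$.

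For odd $n$ I would break the $\pm$-symmetry by moving the center onto the conjugation axis. Writing $\aaa\cap\RR=\ZZ\rho$ for the rank-one sublattice of real points, I would place the center at $\rho/3$ (the device of Schinzel~\cite{S}): of the two candidate real points $\rho/3\pm r$ at most one can then lie in $\ZZ\rho$, forcing $B=1$, while all remaining lattice points on the circle pair off under $z\mapsto\bar z$; hence the total is the odd number $2A+1$. The pair-count $A$ is governed by the same factorizations $\pp^{\,i}\bar\pp^{\,k-i}$, now subject to the congruence modulo $3$ imposed by the shifted center, and scaling $r^{2}$ by the split prime $p$ lets $A$ range so as to realize every odd value; combined with the even case this produces a circle through exactly $n$ points for every $n$.

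The main obstacle is the odd case. One must verify that the congruence modulo $3$ coming from the center $\rho/3$ interacts cleanly with the factorization $\pp^{\,i}\bar\pp^{\,k-i}$, so that precisely one real point survives ($B=1$) while every other point pairs off under conjugation, and that the residual count $A$ then assumes all the values needed to hit each odd $n$ as the radius is scaled by $p$. A secondary technical point, already present in the even case, is the passage from the principal class to a general ideal class $[\aaa]$: here one must choose the auxiliary ideal $\mathfrak{b}_{0}$ rigidly enough (for instance as the unique ideal of its norm in $[\aaa]^{-1}$, with separate care when $[\aaa]$ is $2$-torsion) that the factorizations still biject with the $k+1$ exponents. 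This parity-and-congruence bookkeeping, rather than any single deep input, is where the real work lies; the even case otherwise follows essentially formally from Theorems~\ref{thm:main1} and~\ref{thm:Cox2}.
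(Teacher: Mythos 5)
Your proposal does not prove the statement at hand. The statement is Theorem \ref{thm:Cox2}, i.e.\ Cox's criterion that for an odd prime $p\nmid n$ one has $p=x^2+ny^2$ if and only if $p$ splits completely in the ring class field $L$ of the order $\ZZ[\sqrt{-n}]$. What you wrote is instead a sketch of a proof of Theorem \ref{thm:main} (that every $2$-dimensional integral lattice is universally concyclic) --- and in your second paragraph you explicitly invoke Theorem \ref{thm:Cox2} to produce infinitely many primes of the form $p=x^{2}+ny^{2}$. As a proof of the statement this is circular: it assumes the very equivalence it is supposed to establish, and all of the remaining content (conjugation symmetry of the circle count, Schinzel's $\rho/3$ device for odd counts, the enumeration of ideals $\mathfrak{p}^{i}\bar{\mathfrak{p}}^{k-i}$) is directed at a different theorem.

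For the record, the paper itself offers no proof of Theorem \ref{thm:Cox2}: it is quoted from \cite[Theorem~9.4]{Cox}, where the proof is genuine class field theory, not lattice-point counting. In outline: $p=x^{2}+ny^{2}$ holds iff $p$ is represented by the principal form of discriminant $-4n$, iff (via the correspondence of Theorem \ref{thm:Cox1}) there is a proper $\ZZ[\sqrt{-n}]$-ideal of norm $p$ lying in the principal class, iff $p$ splits in $K$ and a prime $\mathfrak{p}$ above $p$ has trivial class in the ring class group; the existence of the ring class field $L$ and the Artin isomorphism between the ring class group and $\mathrm{Gal}(L/K)$ then translate triviality of the class into $\mathfrak{p}$ --- hence $p$, since $L/\QQ$ is Galois and $p$ is unramified --- splitting completely in $L$. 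None of these ingredients (the Artin reciprocity map, the existence of $L$, the unramifiedness argument) appears in your proposal, and they cannot be recovered from the counting machinery you set up, which presupposes a supply of split primes that only this theorem provides.
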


\section{Proof of Theorem \ref{thm:main1}}

{\it Proof of Theorem \ref{thm:main1}.} 
%
We remark that $\ZZ[\sqrt{-n}]$ can be considered as 
the order $\ZZ[\sqrt{-n}]=\OOO_{f}\subset K=\QQ(\sqrt{-d})$ for 
some $f$ and $d$ with the following condition $-4n=f^2d_K$, namely, 
\[
n=
\left\{
\begin{array}{ll}
f^2d \quad &{\rm if}\ -d\equiv 2,\ 3 \pmod {4}, \\
\displaystyle \frac{f^2d}{4} \quad &{\rm if}\ -d\equiv 1 \pmod {4}. 
\end{array}
\right.
\]
Therefore, we remark that $\ZZ[\sqrt{-n}]=\OOO_{f}$. 

We fix a prime $p$ such that there exists a $z\in\ZZ[\sqrt{-n}]$ 
with $\vert z\vert^{2}=p$, $\left(\frac{d_{K}}{p}\right)=1$ and $(p,f)=1$. 
Because of Theorem \ref{thm:facprime}, 
$(p)=\mathfrak{p}\mathfrak{p}^{\prime}$ in $\mathcal{O}_{K}$ 
for some $\mathfrak{p}$. 
Moreover, the condition $z\in \ZZ[\sqrt{-n}]$ implies that 
the ideals $\mathfrak{p}$ and $\mathfrak{p}^{\prime}$ are principal ideals. 
We set 
\begin{eqnarray*}
\mathfrak{q}&=&\mathfrak{p}\cap\mathcal{O}_f\\
\mathfrak{q}^{\prime}&=&\mathfrak{p}^{\prime}\cap\mathcal{O}_f. 
\end{eqnarray*}
Then, by Proposition \ref{prop:cox1}, the ideals $\mathfrak{q}$ and 
$\mathfrak{q}^{\prime}$ are 
principal ideals of $\mathcal{O}_{f}$ prime to $f$. 
Because of Lemma \ref{lem:Cox}, $\mathcal{O}_f$-ideal prime to $f$ is proper 
and using the unique factorization of proper ideals 
in Proposition \ref{prop:cox2}, the ideals of norm $p^{k}$ are as follows: 
\begin{equation}\label{eqn:O}
\mathfrak{q}^{k},\ \mathfrak{q}^{k-1}\mathfrak{q}^{\prime},\ldots, 
{\mathfrak{q}^{\prime}}^{k}. 
\end{equation}
Let $z_1$ be the element of $\ZZ[\sqrt{-n}]$ with norm $p^k$. 
Because of Lemma \ref{lem:Cox}, $(z_1)$ is a proper $\OOO_f$-ideal. 
Moreover, for $-z_1\in \ZZ[\sqrt{-n}]$, 
the ideals $(z_1)$ and $(-z_1)$ are same proper $\OOO_f$-ideals. 
Hence, there exists a one-to-one correspondence 
between the non-equivalent elements of $\ZZ[\sqrt{-n}]$ with norm $p^{k}$ 
under the action of $\{\pm1\}$ 
and the set of proper $\OOO_f$-ideals of norm $p^{k}$ 
defined by (\ref{eqn:O}). 
This completes the proof of Theorem \ref{thm:main1}. \e

\section{Proof of Theorem \ref{thm:main}}
\subsection{Setup}

\begin{prop}\label{prop:main}
For any positive integers $n$ and $a$, 
there exists a prime $p$ prime to $n$ such that 
\[p=x^2+ny^2\] 
with $y\equiv 0\pmod{4a}$. 
\end{prop}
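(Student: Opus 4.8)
The plan is to reduce the statement to the representability criterion of Theorem \ref{thm:Cox2} applied to an auxiliary integer, and then to produce the required prime by a density argument. First I would absorb the congruence condition $y\equiv 0\pmod{4a}$ into the quadratic form. Writing $N:=16a^2n$, I observe that a prime $p$ admits a representation $p=x^2+ny^2$ with $4a\mid y$ if and only if $p=x^2+Nz^2$ for some integers $x,z$: given the former, set $z=y/(4a)$; given the latter, set $y=4az$, so that $ny^2=16a^2nz^2=Nz^2$. Thus it suffices to exhibit a single prime $p$, prime to $n$, of the form $x^2+Nz^2$. Since any such $p$ is prime, necessarily $z\neq 0$ (otherwise $p=x^2$ would be composite), so the corresponding $y=4az$ is a genuine multiple of $4a$.

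Next I would apply Theorem \ref{thm:Cox2} with $N$ in place of $n$. Let $L$ be the ring class field of the order $\ZZ[\sqrt{-N}]$ in $K=\QQ(\sqrt{-N})$. The theorem asserts that for an odd prime $p\nmid N$ one has $p=x^2+Nz^2$ if and only if $p$ splits completely in $L$. Hence the task becomes: find an odd prime $p\nmid N$ that splits completely in $L$.

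The existence of such a prime is guaranteed because the set of primes splitting completely in the number field $L$ has positive density (by the Chebotarev density theorem this density equals $1/[L:\QQ]>0$), and is therefore infinite. Discarding the finitely many primes dividing $N$ together with the prime $2$, I obtain an odd prime $p\nmid N$ splitting completely in $L$. By Theorem \ref{thm:Cox2} we then have $p=x^2+Nz^2$; since $p\nmid N$ and $n\mid N$ we get $p\nmid n$, so $p$ is prime to $n$, and setting $y=4az$ yields $p=x^2+ny^2$ with $4a\mid y$, as required.

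The only substantive point is the existence step, namely that infinitely many primes split completely in $L$. Because ring class fields are Galois over $\QQ$ but in general non-abelian, I expect this to rest on the Chebotarev density theorem rather than on Dirichlet's theorem on primes in arithmetic progressions; everything else is a formal manipulation of the representation $x^2+Nz^2$ and the bookkeeping check that only finitely many primes must be excluded.
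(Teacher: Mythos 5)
Your proposal is correct and follows essentially the same route as the paper: both set $N=16a^2n$ (the paper's $n'$), invoke Theorem \ref{thm:Cox2} for the form $x^2+Nz^2$ via the ring class field $L$ of the order $\ZZ[\sqrt{-N}]$, and conclude from the positive density (hence infinitude) of primes splitting completely in $L$. Your explicit observations that $z\neq 0$ and that only finitely many primes need be discarded are minor tidy-ups of the same argument.
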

\begin{proof}
We set $n^{\prime}=16a^2n$. 
Let $L$ be the ring class field of the order $\ZZ[\sqrt{-n}]$. 
(We refer to Cox \cite{Cox} for the concept of ring class fields.)
Because of Theorem \ref{thm:Cox2}, 
there exists a prime $p$ such that 
\begin{eqnarray*}
p&=&x^2+n^{\prime}y^2\\
&=&x^2+n(4ay)^2
\end{eqnarray*}
if and only if $p$ splits completely in $L$. 
Then 
the primes that split completely in $L$ 
have density $1/[L:K]$, and in particular 
there are infinitely many of them 
(cf. [2. Corollary~5.21] and [2. Corollary~8.18]). 
Hence, there exists a prime $p$ prime to $n$. 
Therefore, we complete the proof of Proposition \ref{prop:main}. 
\end{proof}

Because of Proposition \ref{prop:main}, 
there exists prime $p$ prime to n such that 
$p=x_1^2+ny_1^2$ 
with $y_1\equiv 0\pmod{4a}$. 
We fix such a prime and denote it by $p_{n,a}$. 
Then we define $A_{n,a}(k)$ as follows: 
\[
A_{n,a}(k):=\{z\in \ZZ[\sqrt{-n}] \mid \vert z\vert^2=p_{n,a}^k\}. 
\]
By Proposition \ref{prop:main}, if $x+y\sqrt{-n}\in A_{n,a}(k)$ 
then $y\equiv 0 \pmod{4a}$ and 
\begin{equation}
x+y\equiv \pm j\pmod{4a}, \label{eqn:j}
\end{equation}
where 
$j\equiv x_1^k\pmod{4a}$, $1\leq j\leq 4a-1$.  
So, we define $\check{A}_{n,a}(k)$ as follows:
\[
\check{A}_{n,a}(k):=\{x+y\sqrt{-n}\in A_{n,a}(k)\mid x+y\equiv -j \pmod{4a}\}. 
\]

\begin{lem}\label{lem:number}
$\sharp A_{n,a}(k)=2(k+1)$ and $\sharp\check{A}_{n,a}(k)=k+1$. 
\end{lem}

\begin{proof}
Because of Proposition \ref{prop:main}, 
$(d_K/p_{n,a})=1$ and $(p_{n,a},f)=1$. 
Hence, by Theorem \ref{thm:main1} $\sharp A_{n,a}(k)=2(k+1)$. 
If $x+y\sqrt{-n}\in {A}_{n,a}(k)$, 
then $x\neq 0$, $-x+y\sqrt{-n}\in {A}_{n,a}(k)$, 
and only one of them belongs to $\check{A}_{n,a}(k)$. 
Therefore, $\sharp\check{A}_{n,a}(k)=k+1$. 
\end{proof}


\subsection{Proof of Theorem \ref{thm:main}}
Here, we start the proof of Theorem \ref{thm:main}. \\
{\it Proof of Theorem \ref{thm:main}.} 
Let $\Lambda$ be a $2$-dimensional integral lattice 
and let the associated quadratic form be $ax^2+bxy+cy^2$. 
Let $\OOO_{f}\subset \QQ{\sqrt{-d}}$ 
be the order corresponding to the lattice $\Lambda$. 
We set $n=-f^2d_{K}$ and $\alpha:=(-b+\sqrt{-n})/(2\sqrt{a})$. 
It is enough to show that for each integer $k>0$, 
there is a circle in the 
complex plane that passes through exactly $k+1$ points of $\Lambda$. 
For $k>0$, define a circle $\Gamma_{k}$ 
in complex plane as follows: 
\[\vert 4\sqrt{a}z-j\vert ^2=p_{n,a}^k, \] 
where $j$ is defined by (\ref{eqn:j}). 
Let $C(k)$ be the subset of $\Lambda$ lying on the circle  
$\Gamma_{k}$. 
We show that $\sharp C(k)=k+1$. 
If $z=\sqrt{a}x+\alpha y\in C(k)$ then $4\sqrt{a}z-j=4ax-2by-j+2y\sqrt{-n}$, 
so $4ax-2by-j+2y\equiv -j\pmod{4a}$. 
Therefore $4\sqrt{a}z-j\in \check{A}_{n,a}(k)$. 
Hence we can define the map $\varphi:C(k)\rightarrow \check{A}_{n,a}(k)$ by: 
\[
z\mapsto 4\sqrt{a}z-j.
\]
This map is a bijection. 
To see this, suppose $x+y\sqrt{-n}\in \check{A}_{n,a}(k)$. 
Then $x+y\equiv -j\pmod{4a}$, 
that is, $x+by+j \equiv 0\pmod{4a}$. 
Moreover, by Proposition \ref{prop:main}, 
$y\equiv 0 \pmod{4a}$, and hence $y$ is even. 
Therefore, we can define a map from $ \check{A}_{n,a}(k)$ to $C(k)$ as follows: 
\[
x+y\sqrt{-n}\mapsto \frac{x+by+j}{4\sqrt{a}}+\frac{y}{2}\alpha.
\]
This gives the inverse of $\varphi$. Therefore $\varphi$ is surjective, that is, 
$\sharp C(k)=\sharp \check{A}_{n,a}(k)=k+1$. \e

Informing Hiroshi Maehara of Theorem \ref{thm:main}, 
he proved the following fact: 
\begin{cor}\label{cor:Maehara}
If $(\alpha /\beta)^{2}\in \QQ$ 
then $\Lambda[(\alpha, 0), (0, \beta)]$ is universally concyclic.  
\end{cor}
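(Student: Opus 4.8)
The plan is to deduce the corollary from Theorem~\ref{thm:main} by rescaling $\Lambda[(\alpha,0),(0,\beta)]$ to an integral lattice, exploiting that the property of being universally concyclic is invariant under similarities of the plane. The key observation is that for any real $\lambda>0$ the dilation $z\mapsto\lambda z$ of $\RR^{2}$ carries circles bijectively to circles and carries a lattice $\Lambda$ to the lattice $\lambda\Lambda$; hence a circle passes through exactly $n$ points of $\Lambda$ if and only if its image passes through exactly $n$ points of $\lambda\Lambda$. Therefore $\Lambda$ is universally concyclic if and only if $\lambda\Lambda$ is, and it suffices to exhibit one value of $\lambda$ for which $\lambda\Lambda[(\alpha,0),(0,\beta)]$ is integral.

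To produce such a $\lambda$ I would use the hypothesis $(\alpha/\beta)^{2}\in\QQ$. Replacing the generators by their negatives if needed, assume $\alpha,\beta>0$, and write $\alpha^{2}/\beta^{2}=p/q$ with $p,q$ positive integers. Taking $\lambda=\sqrt{q}/\beta$, the scaled generators become $\lambda(\alpha,0)=(\sqrt{p},0)$ and $\lambda(0,\beta)=(0,\sqrt{q})$, since $(\lambda\alpha)^{2}=q\alpha^{2}/\beta^{2}=p$ and $(\lambda\beta)^{2}=q$. The Gram matrix of $\lambda\Lambda=\Lambda[(\sqrt{p},0),(0,\sqrt{q})]$ is then $\mathrm{diag}(p,q)$, whose entries are integers. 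Since every inner product $(u,v)$ with $u,v\in\lambda\Lambda$ is a $\ZZ$-linear combination of these three Gram entries, $\lambda\Lambda$ is an integral lattice in the sense used in Theorem~\ref{thm:main}.

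With the integral lattice in hand, Theorem~\ref{thm:main} shows that $\lambda\Lambda$ is universally concyclic, and the similarity invariance established in the first paragraph transfers this property back to $\Lambda=\Lambda[(\alpha,0),(0,\beta)]$, completing the argument. The only point requiring genuine care is the similarity invariance of universal concyclicity; everything else is a short computation. It is worth noting that the rationality hypothesis enters in exactly the right place: we need only the ratio $\alpha^{2}:\beta^{2}$ of the diagonal Gram entries to be rational so that a single real dilation can clear denominators in both entries simultaneously, which is why $(\alpha/\beta)^{2}\in\QQ$, rather than $\alpha/\beta\in\QQ$, is the natural condition here.
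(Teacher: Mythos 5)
Your proposal is correct and follows essentially the same route as the paper: both rescale $\Lambda[(\alpha,0),(0,\beta)]$ by a suitable similarity to an integral rectangular lattice (the paper uses $\Lambda[(a,0),(0,\sqrt{ab})]$, you use $\Lambda[(\sqrt{p},0),(0,\sqrt{q})]$, which are themselves similar) and then invoke Theorem~\ref{thm:main} together with the similarity invariance of universal concyclicity. Your write-up is in fact a bit more careful than the paper's about justifying that invariance.
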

\begin{proof}
We assume that $(\alpha /\beta)^2=b/a$, 
where $b/a$ is irreducible fraction. 
Then, the lattices $\Lambda[(\alpha,0),(0,\beta)]$ and 
$\Lambda[(a,0),(0,\sqrt{ab})]$ are similar 
under the similarity transformation $\alpha/a$ 
and $\Lambda[(a,0),(0,\sqrt{ab})]$ is integral lattice. 
Because of Theorem \ref{thm:main}, 
$\Lambda[(a,0),(0,\sqrt{ab})]$ is universally concyclic, 
so is $\Lambda[(\alpha,0),(0,\beta)]$. 
\end{proof}
\begin{rem}
Finally, we generalize the definition of universally concyclic to 
higher dimensions. 
\begin{df}\label{df:higher}
Let $\Lambda\subset \RR^{d}$ be a $d$-dimensional lattice. 
If there is a spherical surface $S^{d-1}$ in $\RR^d$ 
that passes through exactly $n$ points of $\Lambda$ 
for every integer $n > 0$, 
then $\Lambda$ is called universally concyclic. 
\end{df}
In \cite{M2}, Maehara remark that $\ZZ^{3}$ 
is universally concyclic because the spherical surface 
$(4x-1)^2+(4y)^2+(4z-\sqrt{2})^2 = 17k+2$ 
passes through exactly $k+1$ 
points of $\ZZ^3$. 
We also remark that any integral lattices in higher dimension $\RR^d$ 
are universally concyclic: 
\begin{cor}\label{cor:higher}
Any integral lattices in $\RR^d$ are universally concyclic. 
\end{cor}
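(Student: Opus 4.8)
The goal is to prove Corollary \ref{cor:higher}: any integral lattice in $\RR^d$ is universally concyclic, where the relevant notion is Definition \ref{df:higher}. My plan is to reduce the $d$-dimensional problem to the $2$-dimensional Theorem \ref{thm:main} by slicing the lattice with a suitable $2$-dimensional sublattice and then positioning a sphere so that it meets $\Lambda$ only in that slice. The key observation is that Theorem \ref{thm:main} already produces, for each integer $k>0$, a circle $\Gamma_k$ in a plane $P$ passing through exactly $k+1$ points of a chosen $2$-dimensional integral sublattice; the work in higher dimensions is to promote this circle to a sphere $S^{d-1}$ that captures those same $k+1$ points and no others.

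First I would fix a primitive $2$-dimensional sublattice $M\subset\Lambda$ spanning a plane $P=\RR^2$; since $\Lambda$ is integral, $M$ is a $2$-dimensional integral lattice, so Theorem \ref{thm:main} applies to it. Write $\Lambda$ as a disjoint union of translates $M+t$ of $M$ by coset representatives $t$ lying in the orthogonal complement direction (more precisely, decompose $\Lambda$ according to its projection onto $P^{\perp}$). For a sphere centered at a point $c\in P$ with $c\notin\Lambda$, the intersection with each affine slice $P+v$ (where $v\in P^{\perp}$) is either empty or a lower-dimensional sphere of radius $\sqrt{R^2-\lvert v\rvert^2}$; in the central slice $v=0$ this is exactly the circle $\Gamma_k$ furnished by Theorem \ref{thm:main}. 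The plan is to choose the center and radius so that the central slice reproduces $\Gamma_k$ while every other slice $P+v$ with $v\neq 0$ contributes no lattice point at all.

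The main obstacle — and the step I expect to require the most care — is controlling the off-plane slices. Points of $\Lambda$ outside $P$ have $\lvert v\rvert^2$ bounded below by the squared length of the shortest nonzero vector in the projection of $\Lambda$ to $P^{\perp}$, which is a positive constant $\delta$ depending only on $\Lambda$. A lattice point $w=u+v$ (with $u\in P$, $v\in P^{\perp}$) lies on the sphere of radius $R$ centered at $c\in P$ exactly when $\lvert u-c\rvert^2+\lvert v\rvert^2=R^2$. In Theorem \ref{thm:main} the squared radius is $R^2=p_{n,a}^{k}/(16a)$ after rescaling, which is a rational number whose denominator and residues are tightly controlled. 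The strategy is to perturb the center $c$ slightly off the lattice in $P$ so that $R^2-\lvert u-c\rvert^2$ is forced to be irrational (or to avoid the finitely many admissible values of $\lvert v\rvert^2$) for every $u\in P$ with $u+v\in\Lambda$ and $v\neq 0$; since integral lattices have $\lvert v\rvert^2$ ranging over a discrete set of reals, a single generic real perturbation of one coordinate of $c$ kills all off-plane solutions simultaneously while leaving the in-plane count intact.

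Concretely, I would argue as follows. Because $\Lambda$ is integral, all pairwise squared distances among lattice points are integers, so $\lvert v\rvert^2$ takes values in a discrete subset $S\subset\RR_{\ge 0}$ with $S\cap(0,\delta)=\emptyset$. Choosing the sphere's center at an irrationally perturbed point and its squared radius rational as in Theorem \ref{thm:main}, the equation $\lvert u-c\rvert^2=R^2-\lvert v\rvert^2$ can have integer-lattice solutions $u$ only when the right-hand side is rational, which (for generic perturbation) happens for exactly one value of $\lvert v\rvert^2$, namely the central one $\lvert v\rvert^2=0$. On that central slice the sphere meets $P$ in precisely the circle $\Gamma_k$, whose intersection with $M$ — and hence with $\Lambda\cap P$ — has exactly $k+1$ points by Theorem \ref{thm:main}. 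Therefore the sphere $S^{d-1}$ passes through exactly $k+1$ points of $\Lambda$, and letting $k$ range over all positive integers establishes that $\Lambda$ is universally concyclic in the sense of Definition \ref{df:higher}. The cleanest packaging is to reduce directly to the planar case, treating the genericity of the perturbation as the crux; all remaining estimates are routine.
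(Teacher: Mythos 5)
Your reduction to Theorem \ref{thm:main} via a $2$-dimensional integral sublattice $M$ spanning a plane $P$ is the right first move and is also how the paper begins, but the mechanism you propose for excluding off-plane lattice points does not work. Once you insist that the central slice $S^{d-1}\cap P$ be the circle $\Gamma_k$, the center $c$ and radius $R$ of your sphere are completely determined: they must be the center and radius of $\Gamma_k$. So there is no free parameter left to make ``generic''. If you do perturb $c$ inside $P$, you change the circle $S^{d-1}\cap P$ and lose the $k+1$ points of $M$ on it (a circle through three or more prescribed points has no freedom), so the claim that the in-plane count is ``left intact'' fails. And with $c$ and $R$ forced, there is no irrationality to exploit: the projections of $\Lambda$ onto $P$ and $P^{\perp}$ lie in scaled dual lattices, so every quantity $R^{2}-\vert u-c\vert^{2}$ and every $\vert v\vert^{2}$ is rational. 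Concretely, for $\Lambda=\ZZ^{3}$ and $P$ the $xy$-plane your sphere is $(4x-j)^{2}+(4y)^{2}+(4z)^{2}=p^{k}$, a ternary representation problem that will in general have solutions with $z\neq 0$; the equatorial extension of $\Gamma_k$ simply is not point-free off the plane.

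The repair is to move the center \emph{out of} $P$ rather than within it, which is exactly what the paper does. Put the center at distance $a$ from $P$ on the line through the center of $\Gamma_k$ orthogonal to $P$, with radius $\sqrt{a^{2}+r^{2}}$ where $r$ is the radius of $\Gamma_k$; every sphere in this one-parameter family still contains $\Gamma_k$, and two distinct members meet exactly in $\Gamma_k$ (their radical hyperplane argument), so each lattice point off $P$ lies on at most one member. All candidate points for $a\in[0,1]$ lie in a fixed bounded region and hence are finitely many --- this is where your (correct) discreteness observation about $\vert v\vert^{2}$ actually earns its keep --- while the family is uncountable, so some $a_{0}$ gives a sphere meeting the lattice only in the $k+1$ points of $\Gamma_k$. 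The paper iterates this one dimension at a time through a flag $\RR^{(2)}\subset\RR^{(3)}\subset\cdots\subset\RR^{(d)}$, which keeps the ``two spheres meet only in the previous sphere'' step valid at every stage. Your write-up is essentially one substitution away from this: replace the in-plane perturbation of the center by an out-of-plane one and the counting argument goes through.
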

\begin{proof}
Let $\Lambda$ be an integral lattice in $\RR^d$. 
We define sublattices $\{\Lambda^{(i)}\}_{i=2}^{d}$ such that 
\[
\Lambda^{(2)}\subset \Lambda^{(3)} \subset \cdots \subset \Lambda^{(d)} = \Lambda
\]
and $\Lambda^{(i)}$ spans $\RR^i$ which we denote by $\RR^{(i)}$ for all $i$. 
Because of Theorem \ref{thm:main}, for each $k>0$, 
we can define the circle $S^{(1)}\subset\RR^{(2)}$ 
that passes through exactly $k$ points of $\Lambda^{(2)}$. 


Let $O^{(1)}$ be the center of $S^{(1)}$ and 
let $\ell$ be a half line in $\RR^{(3)}$ whose origin is $O^{(1)}$, 
which is orthogonal to $\RR^{(2)}$. 
We define the sphere $S^{(2)}(a)$, 
whose center $O^{(2)}(a)$ lies on $\ell$, the 
distance between $O^{(1)}$ and $O^{(2)}(a)$ is $a$ 
and whose radius is $\sqrt{a^2 + (\mbox{radius of } S^{(1)})^2}$. 
We assume that $0\leq a \leq 1$. 

Since $\Lambda$ is an integral lattice, 
the number of the points of $\Lambda^{(3)}$ 
which intersect in $S^{(1)}(a)$ 
is finite for any $0\leq a \leq 1$. 
Moreover, for $a_1\neq a_2$, 
the intersection of $S^{(1)}(a_1)$ and $S^{(1)}(a_2)$ is 
the points of $\Lambda^{(2)}$ in $\Lambda$, namely, the points of $S^{(1)}$. 
On the other hand, for $0\leq a\leq 1$, 
the number of the spheres $S^{(2)}(a)$ is infinite. 
Therefore, there exists a number $a_0$ such that 
the intersection of $S^{(2)}(a_0)$ and $\Lambda$ is 
the points of $\Lambda^{(2)}$. 
We denote $S^{(2)}(a)$ by $S^{(2)}$ 
and $S^{(2)}$ passes through exactly $k$ points of $\Lambda^{(3)}$. 
We can define the spheres $S^{(3)},\ldots , S^{(d-1)}$ recursively 
such that each of $\{S^{(i)}\}_{i=3}^{d-1}$ 
passes through exactly $k$ points of $\Lambda$, 
as we defined $S^{(2)}$ in $\RR^{(3)}$. 
\end{proof}
So, we have shown that 
any integral lattices in $\RR^d$ are universally concyclic. 
However, the points of lattice 
lying on the sphere constructed in the proof of 
Corollary \ref{cor:higher}
are on the plane 
$x_3 = \cdots = x_d = 0$. 
Hence, Maehara added some conditions to Definition \ref{df:higher} 
and showed the following theorem:
\begin{thm}[cf.~\cite{M2}] 
For $n > d \geq 2$, 
there is a sphere in $\RR^d$ that passes through 
exactly $n$ lattice points on $\ZZ^d$, 
and moreover, the $n$ lattice points span 
a $d$-dimensional polytope. 
\end{thm}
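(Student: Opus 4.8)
The plan is to induct on the dimension $d$, at each step lifting a spherical, full-dimensional configuration from a hyperplane to one higher dimension by adjoining a single lattice point off that hyperplane. I keep the two demands separate: that the sphere meet $\ZZ^{d}$ in \emph{exactly} $n$ points, and that those points affinely span $\RR^{d}$. The second demand is cheap, since $d$ lattice points spanning a hyperplane together with one further lattice point off it already span a $d$-dimensional polytope; hence a single off-hyperplane point suffices, and no parity or borderline case arises.

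\emph{Base case $d=2$.} For $n>2$, Theorem~\ref{thm:main} furnishes a circle meeting $\ZZ^{2}$ in exactly $n$ points. A line meets a circle in at most two points and $n\ge 3$, so these points are not collinear and thus span a $2$-dimensional polytope.

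\emph{Inductive step.} Assume the assertion in dimension $d-1\ge 2$ and fix $n>d$. Identify $\RR^{d-1}$ with $H=\{x_{d}=0\}$, so that $\ZZ^{d-1}=\ZZ^{d}\cap H$. By the inductive hypothesis choose in $H$ a sphere $S'$, of centre $O'$ and radius $r'$, meeting $\ZZ^{d-1}$ in exactly $p:=n-1$ points that span $H$; this is legitimate since $n-1>d-1$. Now consider the one-parameter family of spheres $\Sigma(a)$ in $\RR^{d}$ with centre $O'+a e_{d}$ and radius $\sqrt{r'^{2}+a^{2}}$. Setting $x_{d}=0$ gives $|x'-O'|^{2}=r'^{2}$, so $\Sigma(a)\cap H=S'$ for every $a$; hence each $\Sigma(a)$ contains exactly the $p$ spanning points of $S'$ in $H$, and its total lattice-point count is $p+g(a)$, where $g(a)$ is the number of lattice points of $\Sigma(a)$ with $x_{d}\neq 0$. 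It therefore suffices to find an $a$ with $g(a)=1$: the total is then $n$, and any one off-hyperplane point together with the $p\ge d$ points of $S'$ spans $\RR^{d}$.

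\emph{Controlling $g(a)$ — the main obstacle.} A lattice point $(x',m)$ with $m\neq 0$ lies on $\Sigma(a)$ precisely for the single value $a=(|x'-O'|^{2}+m^{2}-r'^{2})/(2m)$, so $g(a)$ counts the off-hyperplane lattice points whose associated parameter equals $a$. As in the argument behind Corollary~\ref{cor:higher}, $g(a)=0$ for all but countably many $a$; the task is to realise $g(a)=1$, i.e. to exhibit a parameter value attained by exactly one off-hyperplane lattice point. The obstruction is coincidence: two distinct lattice points sharing a value satisfy a single non-trivial polynomial equation in the coordinates of $O'$ and in $r'^{2}$, a codimension-one condition, so for generic $(O',r'^{2})$ every value is simple and infinitely many $a$ give $g(a)=1$. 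The genuine difficulty is that $(O',r'^{2})$ is not free: being the centre and radius of a sphere through the $p\ge d$ prescribed spanning points essentially pins it down, so one cannot perturb it continuously to dodge the exceptional loci. The resolution I would pursue is to exploit the infinite discrete supply of admissible in-plane spheres produced by the inductive hypothesis (ultimately by the prime-indexed circles $\Gamma_{k}$ of Theorem~\ref{thm:main}, whose radii $p_{n,a}^{k}$ vary over infinitely many primes) and to show that at least one such $S'$ admits a simple parameter value. Making this selection precise — controlling the coincidences across a discrete rather than a continuous family of centres — is where the real work lies; once a simple $a^{*}$ is secured, $\Sigma(a^{*})$ meets $\ZZ^{d}$ in exactly $p+1=n$ points, one of them off $H$, and these span $\RR^{d}$, which would complete the induction.
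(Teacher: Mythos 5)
First, note that the paper does not actually prove this theorem: it is quoted from Maehara's preprint \cite{M2} without proof, precisely because the construction the paper does carry out (Corollary \ref{cor:higher}) produces only degenerate configurations lying in the plane $x_3=\cdots=x_d=0$. So there is no in-paper argument to compare yours against, and your proposal has to stand on its own.

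It does not yet. Your base case and the bookkeeping of the inductive step are fine: $n-1>d-1$ concyclic spanning points in the hyperplane $H$, plus one lattice point off $H$, give $n$ points spanning a $d$-dimensional pyramid. The gap --- which you candidly flag yourself --- is the existence of a parameter $a$ with $g(a)=1$. The countability argument behind Corollary \ref{cor:higher} only yields $g(a)=0$ for some $a$, which is exactly why that corollary's spheres are degenerate; producing $g(a)=1$ is the entire content of the theorem and is left unproved. Moreover the difficulty is worse than the ``codimension-one coincidence'' picture you paint: the centres $O'$ of the in-plane circles supplied by Theorem \ref{thm:main} (e.g.\ $(j/4,0)$ for $\ZZ^{2}$) typically lie on reflection axes of the lattice, and any lattice isometry fixing $H$ setwise and fixing the axis $\ell$ maps each $\Sigma(a)$ to itself; hence the off-hyperplane lattice points of $\Sigma(a)$ come in orbits of size at least $2$ unless they lie on the fixed locus of that symmetry. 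The collisions you must avoid are therefore not generic accidents but are forced structurally, the sought unique extra point is confined to a lower-dimensional set, and excluding every other off-hyperplane point on top of that requires a genuine number-theoretic argument (presumably what \cite{M2} supplies). Until that selection is carried out, the inductive step is not established.
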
 
Therefore, we can state the following problem: 
\begin{prob}
Let $\Lambda$ be an integral lattice in $\RR^d$. 
We assume $n > d \geq 2$. 
Is there a sphere in $\RR^d$ that passes through 
exactly $n$ lattice points on $\Lambda$, 
which span a $d$-dimensional polytope? 
\end{prob} 
A set of points in the $d$-dimensional Euclidean space is said to be 
in general position if no $d+1$ of them lie in a $(d-1)$-dimensional plane. 
Then, Maehara also proposed the following problem: 
\begin{prob}[cf.~\cite{M2}]\label{prob:Z^3} 
Is there a sphere in $\RR^3$ that passes through a given 
number of lattice points in general position on $\ZZ^3$? 
\end{prob}
It is also an interesting open problem to prove or disprove a similar 
conclusion as in Problem \ref{prob:Z^3} for any integral lattices 
in higher dimension $\RR^{d}$. 
\end{rem}
\bigskip
\noindent
{\bf Acknowledgement.}
The authors are indebted to Hiroshi Maehara for interesting us to 
the property universally concyclic of Euclidean lattices 
and giving the proof of Corollary \ref{cor:Maehara}. 
The authors also thank 
Masanobu Kaneko of Kyushu University 
for informing us the concept of ring class field, \cite{Cox}, 
and for his help in completing 
the proof of Proposition \ref{prop:main} and 
Kensaku Kinjo of Tohoku University for the careful reading of this manuscript and helpful comments. 
The second author was supported by JSPS Research Fellowship.


\end{document}